\documentclass[11pt]{amsart}
\usepackage{graphicx}
\usepackage{epstopdf}

\usepackage{amsmath}
\usepackage{xcolor}
\usepackage{float}
\usepackage{mathtools}
\usepackage{amsthm}
\usepackage{cite}
\usepackage{stix}
\usepackage{amsfonts}
\usepackage{xparse}
\usepackage{setspace}
\usepackage{adjustbox}
\usepackage{subcaption}
\usepackage{braket}
\usepackage{tikzit}
\usepackage{blindtext}
\usepackage[breaklinks]{hyperref}
\usepackage{bookmark}
\usepackage[toc,page]{appendix}
\usepackage[font={small}]{caption}
\usepackage{tikzcode}
\usepackage{thm-restate}

\DeclareEmphSequence{\bfseries\itshape}
\newcommand{\bmb}{\left( \begin{array}{rr}}
\newcommand{\enm}{\end{array}\right)}

\newcommand{\cM}{\mathcal M}

\newcommand{\M}{{\mathcal M}}

\hypersetup{
    colorlinks,
    linkcolor={red!50!black},
    citecolor={blue!100!black},
    urlcolor={blue!80!black}
}

\newcommand{\C}{{\mathbb C}}

\newcommand{\bx}{{\mathbf x}}

\newcommand{\bJ}{{\mathbf J}}

\newcommand{\mcard}[2]{m_{<#1}(#2)}
\newcommand{\tmcard}[2]{m_{<#1}(#2)}
\newcommand{\compratio}{{\mathfrak R_{\C(q;t)}}}

\numberwithin{equation}{section}

\theoremstyle{definition}
\newtheorem{thm}{Theorem}[section]
\newtheorem{prop}[thm]{Proposition}
\newtheorem{defn}[thm]{Definition}
\newtheorem{lemma}[thm]{Lemma}

\newtheorem{remark}[thm]{Remark}

\title{Rank recursion for $q$-Whittaker and Macdonald operators}

\author{Philippe Di Francesco}
\address{Department of Mathematics, University of Illinois Urbana-Champaign, IL 61801, USA}
\email{philippe@illinois.edu}

\author{Hieu Trung Vu}
\address{Yau Mathematical Sciences Center, Tsinghua University, Haidian District, Beijing 100084, China}
\email{htrungvu@mail.tsinghua.edu.cn}

\date{\today}

\begin{document}

\begin{abstract}
In this paper, we introduce and prove a set of rank recurrence relations for $q$-Whittaker and Macdonald operators. We also derive an explicit expression for the  $k$-th power of the $q$-Whittaker operator in terms of the $q$-deformed binomial probability distribution, and we express the rank recurrence relations for the Macdonald operators in terms of the Cauchy determinant.
\end{abstract}

\maketitle
\tableofcontents

\section{Introduction}
The aim of this paper is to introduce a family of  rank recurrence relations for $q$-Whittaker and Macdonald operators, which are difference operators that act on symmetric functions over $\C(q,t)$. These operators have the $q$-Whittaker and Macdonald polynomials as their eigenfunctions. Introduced in the 1980s by Ian G. Macdonald, the Macdonald polynomial is a family of symmetric polynomials with coefficients that are rational functions in two variables $q$ and $t$, see the complete monograph by Macdonald \cite{macdonald1995symmetric}. They are a generalization of many well-known symmetric polynomials via specializing $q$ and $t$, such as Schur polynomials ($q=t$), Jack polynomials ($t=q^\alpha$, $q\to 1$), Hall-Littlewood polynomials ($q=0$), and $q$-Whittaker polynomials ($t=0$).  Macdonald polynomials are indexed by integer partitions $\lambda=(\lambda_1 \geq \lambda_2 \geq \cdots)$ denoted by $P_\lambda(x_1,\ldots,x_N;q,t)$. One of their descriptions is as eigenfunctions of the level-$r$ Macdonald operators on the ring of symmetric functions over $\C(q;t)$ with eigenvalues given by the $r$-th elementary symmetric functions $e_r$, (see the precise Definition \ref{defn: original Macdo} below).
\begin{equation*}
	\cM^r_{N} = \sum_{\substack{I \subset \{1, \dots, N\}\\ I \neq \emptyset}}^N \prod_{\substack{i \in I\\ j \notin I}}\frac{tx_i-x_j}{x_i-x_j}
	\Gamma_i,\qquad 
	\Gamma_i f(x_1,\ldots,x_N) = f(x_1,\ldots,qx_i,\ldots,x_N).
\end{equation*}
The eigenvalue of $P_\lambda$ under $\cM_{N}^{r}$ is given by
\begin{equation*}
	\cM_{N}^{r} P_\lambda = e_r(q^{\lambda_1}t^{N-1}, q^{\lambda_2}t^{N-2},\dots, q^{\lambda_N}) P_\lambda.
\end{equation*}
The Macdonald operators are known not only to possess rich algebraic structures, but also to appear in the context of integrable systems and interacting particle systems. For example, they can be expressed in terms of the double affine Hecke algebra (DAHA) generators \cite{Cherednik2005}, as $q$-moment generators for observables in interacting particle systems \cite{BC14}. They also have connections to discrete integrable systems and quantum cluster algebras \cite{DFK24,DiFrancescoKedem2019,DiFrancescoKedem2025} and can be represented as Hamiltonians of Ruijsennars-Schneider models \cite{Ruijsenaars-Schneider1986,Ruijsenaars1987}, which are a relativistic generalization of the integrable quantum Calogero-Moser-Sutherland systems. Each of these connections has been the subject of intensive research in recent decades, and we refer to the references therein for more details.

At the level of polynomials, Macdonald polynomials are known to possess recursive constructions similar to their Schur polynomials and Jack polynomials counterparts, known as \emph{the branching rule} or \emph{the Pieri rule}. See, for example, Chapter VI of \cite{macdonald1995symmetric} for the classical theory, \cite{Haglund2021,GarsiaHaiman1995, Okounkov1998, LassalleSchlosser2006,LascouxWarnaar2011,vanDiejenEmsiz2015,vanDiejenEmsiz2018, corteel2022multiline} for more recent developments, and \cite{Cherednik2005} for recursion through DAHA intertwiners. However, to the best of our knowledge, recursive constructions for the Macdonald operators themselves have not been well studied, except for some constructions of (interpolating) Macdonald operators at infinity; see \cite{NS14,Cuenca2018} or through shuffle product in \cite{DiFrancescoKedem2018,DiFrancescoKedem2019}. The main result of this paper is  a recursive construction for both $q$-Whittaker and Macdonald operators in Theorem \ref{thm:rank-recursion} and  Theorem \ref{conj: rank recursion}. The two recurrence relations read, respectively:
$$
D^{a}_{N}\left(x_1, \ldots, x_N\right)=\sum_{i=N-a}^N \prod_{\substack{j= N-a\\j \neq i}}^N \frac{x_j}{x_j-x_i} D^{a}_{N-1}\left(x_1, \ldots, x_{i-1}, x_{i+1}, \ldots, x_N\right),
$$
and
$$
\mathcal M^a_{N}(x_1,\dots, x_N)=\sum_{\emptyset\neq J \subseteq K}(-1)^{|J|-1}t^{|J|(|J|-1)/2}\prod_{\substack{j \in K \backslash J\\ i \in J}}\frac{tx_j-x_i}{x_j-x_i}\mathcal M^a_{N-|J|}(\{x_k, k \in [1, N]\backslash J\}),
$$
for any subset $K = \{k_0, \dots, k_{a}\} \subset \{1,\ldots,  N\}$ with $k_0<k_1<\dots<k_{a}$.

These recurrence relations also allow us to derive explicit expressions for the $k$-th power of the $q$-Whittaker and Macdonald operators in Proposition \ref{prop: q-Whittaker power} and Theorem \ref{eq: Macdonald level a power formula}, respectively. In particular, the $k$-th power of the $q$-Whittaker operator can be expressed in terms of a $q$-deformed binomial probability distribution, which was communicated to us by Leonid Petrov through his private discussion with Andrew Ahn \cite{AhnPetrov2024}. As a consequence, we also derive some commutator identities for the $k$-th power of the Macdonald operators with the operator of multiplication by the sum of variables, which generalizes those in Section 6 of \cite{BC15}. These relations, once found, will be useful in studying Macdonald processes and interacting particle systems for models with parameter symmetry \cite{PS24,Petrov2021}.

The paper is organized as follows. In Section \ref{sec:whittaker-rank-recursion}, we prove the rank recursion for the $q$-Whittaker operators using a Lagrange interpolation argument and provide an explicit expression for the $k$-th power of the $q$-Whittaker operators at the end of Section \ref{sec:whittaker-rank-recursion}. In Section \ref{sect:macdonald_recursion}, we prove the rank recursion for the Macdonald operators using a more involved argument on the coefficients of the $q$-shift operators, whose the technical details are in Appendix \ref{appendix1} and \ref{appendix2}. In Section 4, we generalize  some commutator identities that showed up before in the context of Macdonald processes and interacting particle systems \cite{BC14,BC15,BCGS16} using the power formula from Section \ref{sect:macdonald_recursion}. 

\textbf{Acknowledgement:}
The authors would like to thank Leonid Petrov for introducing  us to  the relations \eqref{eq: q-Whittaker power},\eqref{eq: q-whittaker level 1 recurrence} and \eqref{eq:three-term rank recursion} during the long program Geometry, Statistical Mechanics, and Integrability at Insti-
tute for Pure and Applied Mathematics, supported by the National Science Foundation (Grant No. DMS-1925919).


\section{Rank recursion for \texorpdfstring{$q$}{q}-Whittaker and Macdonald Operators}\label{sec:whittaker-rank-recursion}
We consider the (time-translated) $\mathrm{q}$-Whittaker operators of rank $\mathrm{N}$, acting on functions of the variables $\left(x_1, \ldots, x_N\right)$, associated with $P=(x_1 x_2 \cdots x_N)^n$:
$$
D^{a, n}_{N}\left(x_1, \ldots \ldots x_N \right):=\sum_{\substack{I \subset  \{1,\dots, N\} \\ |I|=a}} \prod_{\substack{i \in I \\j \not \in I}} \frac{x_i}{x_i-x_j}\left(x_I\right)^n \Gamma_I
$$
where $x_I=\prod_{i \in I} x_i$, $\Gamma_I = \prod_{i \in I} \Gamma_i$, and $\Gamma_i=q^{x_i \partial_{x_i}}$.

When $P=1$, the operators reduce to the standard $q$-Whittaker operators:
$$D^{a, 0}_{N}\left(x_1, \ldots, x_N\right)=\sum_{\substack{I \subset \{1,\dots, N\} \\ |I|=a}} \prod_{\substack{i \in I \\ j \not \in I}} \frac{x_i}{x_i-x_j} \Gamma_I.$$

Let us first give a motivating example for the rank recursion for the $q$-Whittaker operators. Consider the case $a=1, n=0$, we have: 
\begin{equation}\label{eq: affine relation}
D_N^{1,0}(x_1, \dots, x_N)=\frac{x_N}{x_N-x_{N-1}}D_{N-1}^{1,0}(x_1, \dots, x_{N-1})+\frac{x_{N-1}}{x_{N-1}-x_N}D_{N-1}^{1,0}(x_1, \dots, x_{N-2}, x_N).
\end{equation}
One can check this relation by considering the coefficients of the $q$-shift operators $\Gamma_i$ on both sides. For example, for fixed $i=0, \dots, N-2$, we have on the RHS the coefficient of $\Gamma_i$ is given by
$$\frac{x_N}{x_N-x_{N-1}}\prod_{j \neq i, N-1} \frac{x_i}{x_i-x_j}+\frac{x_{N-1}}{x_{N-1}-x_N}\prod_{j \neq i, N} \frac{x_i}{x_i-x_j}=\prod_{j \neq i} \frac{x_i}{x_i-x_j},$$

\begin{thm}\label{thm:rank-recursion}
We have the recursion relation:
$$
D^{a, n}_{N}\left(x_1, \ldots, x_N\right)=\sum_{i=N-a}^N \prod_{\substack{j= N-a\\j \neq i}}^N \frac{x_j}{x_j-x_i} D^{a, n}_{N-1}\left(x_1, \ldots, x_{i-1}, x_{i+1}, \ldots, x_N\right).
$$
\end{thm}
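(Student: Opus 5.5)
We compare the coefficients of each shift operator $\Gamma_I$, $|I|=a$, on both sides. The factor $(x_I)^n$ depends only on $I$ and multiplies every term containing $\Gamma_I$ on both sides, so we may take $n=0$. Write $K=\{N-a,\dots,N\}$, which has $a+1$ elements. On the right-hand side, $\Gamma_I$ appears in the term indexed by $i$ exactly when $i\notin I$. Since $|K|=a+1>|I|$, there is always at least one $i\in K\setminus I$. The identity to prove is therefore
\begin{equation*}
\prod_{\substack{l\in I\\ j\notin I}}\frac{x_l}{x_l-x_j}=\sum_{i\in K\setminus I}\prod_{\substack{j\in K\\ j\neq i}}\frac{x_j}{x_j-x_i}\prod_{\substack{l\in I\\ j\notin I\cup\{i\}}}\frac{x_l}{x_l-x_j}.
\end{equation*}
Dividing by the left-hand side, this becomes
\begin{equation*}
1=\sum_{i\in K\setminus I}\prod_{\substack{j\in K\\ j\neq i}}\frac{x_j}{x_j-x_i}\prod_{l\in I}\frac{x_l-x_i}{x_l}.
\end{equation*}

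For $i\in K\setminus I$ we split the product over $j\in K\setminus\{i\}$ into the parts with $j\in I\cap K$ and with $j\in K\setminus(I\cup\{i\})$. Let $A=I\setminus K$ be the elements of $I$ outside $K$, so $|A|=a-|I\cap K|$. For $j\in I\cap K$, the factor $x_j/(x_j-x_i)$ cancels exactly against $(x_j-x_i)/x_j$. Set $S=K\setminus I$, so $|S|=a+1-|I\cap K|=|A|+1$. What remains to prove is
\begin{equation*}
\sum_{i\in S}\prod_{\substack{j\in S\\ j\neq i}}\frac{x_j}{x_j-x_i}\prod_{l\in A}\Bigl(1-\frac{x_i}{x_l}\Bigr)=1.
\end{equation*}

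This is a Lagrange interpolation identity. Put $y=1/x$ and let $p(y)=\prod_{l\in A}(1-y_l^{-1}/y)$, or equivalently work with the polynomial $f(u)=\prod_{l\in A}(1-u/x_l)$ in $u$. This polynomial has degree $|A|=|S|-1$ and constant term $f(0)=1$. The weight $\prod_{j\neq i}x_j/(x_j-x_i)$ is exactly the Lagrange basis polynomial $\prod_{j\neq i}(u-x_j)/(x_i-x_j)$ evaluated at $u=0$. Lagrange interpolation of $f$ at the $|S|$ nodes $\{x_i\}_{i\in S}$ is exact because $\deg f\le|S|-1$. Evaluating that interpolation at $u=0$ gives $\sum_{i\in S}f(x_i)\prod_{j\ne i}x_j/(x_j-x_i)=f(0)=1$, which is what we need.

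The main obstacle is the bookkeeping: checking that the surviving factors on the right are exactly $\prod_{l\in I}(x_l-x_i)/x_l$, and that the cancellation against $I\cap K$ leaves the degree $|S|-1$. The latter requires the count $|K|=a+1$, which is why the sum in the theorem runs from $N-a$ to $N$. The case $I\supset K$ cannot occur, and the case $A=\emptyset$ reduces to $\sum_{i\in S}\prod_{j\ne i}x_j/(x_j-x_i)=1$, which is again Lagrange interpolation of a constant. This also recovers the motivating example \eqref{eq: affine relation}.
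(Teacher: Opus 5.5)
Your proof is correct. The reduction is the same as the paper's: you extract the coefficient of $\Gamma_I$, note that only $i\in K\setminus I$ contribute, and factor out $x_I^n$. Where you differ is in how you prove the resulting rational identity.

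The paper clears denominators to get $x_I=\sum_{i\in K\setminus I}\prod_{j\in K,\,j\neq i}\frac{x_j}{x_j-x_i}\prod_{k\in I}(x_k-x_i)$. It then decouples the variables into free $y_1,\dots,y_a$ (for $I$) and $z_1,\dots,z_{a+1}$ (for $K$), and proves $y_1\cdots y_a=\sum_{i=1}^{a+1}\prod_{k\neq i}\frac{z_k}{z_k-z_i}\prod_{j=1}^{a}(y_j-z_i)$ by induction on $a$. It specializes $y_a$ to each $z_k$ to recover the $a-1$ case, implicitly using that both sides are affine in $y_a$ and agree at $a+1\geq 2$ points. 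In this free-variable form the overlap $I\cap K$ needs no bookkeeping, because the terms with $z_i=y_j$ vanish automatically.

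You instead cancel the overlap by hand. You then finish in one step by evaluating the Lagrange interpolant of $f(u)=\prod_{l\in A}(1-u/x_l)$ on the $|A|+1$ nodes $\{x_i\}_{i\in S}$ at $u=0$, with no induction.

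The two ideas combine well. Applying your evaluation-at-zero trick to $g(u)=\prod_{j=1}^{a}(y_j-u)$ on the nodes $z_1,\dots,z_{a+1}$ proves the paper's free-variable identity directly, avoiding both the induction and your cancellation step.

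One small presentational point: the aside about $y=1/x$ and $p(y)$ is garbled and never used. Drop it and work only with $f(u)$.
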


\begin{proof}
	Collecting the coefficients of $\Gamma_I$ on both sides, the identity boils down to the relation:
\begin{equation}\label{eqn:qwhitt_coeff}
\prod_{\substack{k\in I \\ \ell \not \in I}} \frac{x_k}{x_k-x_\ell}=\sum_{\substack{i=N-a}}^{N} \prod_{\substack{j=N-a\\j\neq i}}^N \frac{x_j}{x_j-x_i} \prod_{\substack{k \in I_i\\ \ell \notin I_i, \ell \in \{1,\dots,N\}\backslash \{i\}}} \frac{x_k}{x_k-x_\ell}.
\end{equation}
where we define $I_i=I \backslash\{i\}$ if $i \in I, I_i=I$ otherwise. However, as only $i \notin I$ contribute to the r.h.s. (otherwise $D_{a, n}^{(N-1)}\left(x_1, \ldots , x_{i-1}, x_{i+1}, \ldots, x_N\right)$ would not have a $\Gamma_I$ term), we may rewrite this as:
$$
\prod_{\substack{k\in I \\ \ell \not \in I}} \frac{x_k}{x_k-x_\ell}=\sum_{\substack{i=N-a \\ i \notin I}}^{N} \prod_{\substack{j=N-a\\j\neq i}}^N \frac{x_j}{x_j-x_i} \prod_{\substack{k \in I\\ \ell \notin I, \ell \neq i}} \frac{x_k}{x_k-x_\ell}.
$$
Multiplying both sides with $\displaystyle \prod_{\substack{k \in I \\ \ell \notin I}} \left(x_k - x_\ell\right)$, we have

$$
x_I=\sum_{\substack{i=N-a \\ i \notin I}}^N \prod_{\substack{j=N-a\\j \neq i}}^N \frac{x_j}{x_j-x_i} \prod_{k \in I}\left(x_k-x_i\right) 
$$

This latter identity can be viewed as an interpolation relation for the polynomial $P(x)=x_I$, by choosing interpolation points $x^*(i):=\left(x_k, k \in \{N-a,\dots, N\} \backslash\{i\}\right), i \in \{N-a,\dots, N\}$, at which the prescribed value is $P\left(x^*(i)\right)=x_{\{N-a,\dots, N\}\backslash\{i\}}$, namely
$$
P(x)=\sum_{i=N-a}^{N} P\left(x^*(i)\right) \frac{\prod_{k \in I}\left(x_k-x_i\right)}{\prod_{j \in\{N-a, \dots, N\} \backslash \{i\}} \left(x_j-x_i\right)}
$$

Upon relabeling $\left(x_i, i \in I\right)$ as $y_1, y_2, \ldots, y_a$ and $\left(x_k, k \in\{N-a, \dots, N\}\right)$ as $z_1, \ldots, z_{a+1}$, this reads:
$$
y_1 y_2 \cdots y_a=\sum_{i=1}^{a+1} \prod_{\substack{k=1 \\ k\neq i}}^{a+1} \frac{z_k}{z_k-z_i} \prod_{j=1}^a\left(y_j-z_i\right)
$$

Note that setting $y_a=z_{a+1}$ yields:
$$
y_1 \cdots y_{a-1}=\sum_{i=1}^a \prod_{\substack{k=1 \\ k \neq i}}^a \frac{z_k}{z_k-z_i} \prod_{j=1}^{a-1}\left(y_j-z_i\right)
$$
which is the same identity for $a \rightarrow a-1$. Analogous relations are found when setting $y_a=z_1, z_2, \ldots, z_a$ as well. By induction, the identity therefore boils down to the case $\mathrm{a}=1$, where:
$$
y_1=\frac{z_2}{z_2-z_1}\left(y_1-z_1\right)+\frac{z_1}{z_1-z_2}\left(y_1-z_2\right)
$$
which is the Lagrange interpolation polynomial for $P\left(y_1\right)=y_1$ using values at $z_1$ and $z_2$. The identity \eqref{eqn:qwhitt_coeff} then follows.
\end{proof}     

Define the following quantities: 
\begin{align*}
	D_N&:=D_N^{a=1;n=0}(x_1, \dots, x_N)=\sum_{i=1}^N \prod_{j \neq i} \frac{x_i}{x_i-x_j} \Gamma_i\\
	\widetilde D_N&:=D_N^{a=1;n=0}(x_1, \dots, x_{N-1},x_{N+1})\\
	\varphi_{q, \alpha}(j|k)&:=\alpha^j(\alpha;q)_{k-j}\frac{(q;q)_k}{(q;q)_{k-j}(q;q)_j}
\end{align*}
where $j,k$ are non-negative integers, $\alpha$ is a parameter and $(a ; q )_n	$ denotes the $q$-Pochhammer symbol
\[{\displaystyle (a;q)_{n}=\prod _{k=0}^{n-1}(1-aq^{k})=(1-a)(1-aq)(1-aq^{2})\cdots (1-aq^{n-1}),}\]
with the convention $(a;q)_0=(q;q)_0:=1$. For integers $0\le j\le k$, define the $q$-binomial coefficient as
\[
\binom{k}{j}_q:=\frac{(q;q)_k}{(q;q)_j(q;q)_{k-j}},
\qquad
\varphi_{q,\alpha}(j\mid k):=\alpha^j\,(\alpha;q)_{k-j}\,\binom{k}{j}_q.
\]
Extend by $\varphi_{q,\alpha}(j\mid k):=0$ if $j<0$ or $j>k$, then for every integer $k\ge 0$ and every integer $j$,
\begin{equation}\label{eq:pascal}
\varphi_{q,\alpha}(j\mid k{+}1)
=
\bigl(1-\alpha q^{k-j}\bigr)\,\varphi_{q,\alpha}(j\mid k)
+
\alpha\,q^{k+1-j}\,\varphi_{q,\alpha}(j{-}1\mid k),
\end{equation}
and moreover the coefficients are normalized:
\begin{equation}\label{eq:sum1}
\sum_{j=0}^{k}\varphi_{q,\alpha}(j\mid k)=1\qquad\text{for all }k\ge 0.
\end{equation}
These properties of the function $\varphi_{q,\alpha}(j\mid k)$ are written in several texts, for example, see  \cite{Povolotsky2013}. The following identity was shared with us by Leonid Petrov through his  private communication with Andrew Ahn \cite{AhnPetrov2024}.

\begin{prop}
	\label{prop: q-Whittaker power}
	For any $k >0$, 
	\begin{equation}
		\label{eq: q-Whittaker power}
		\left(\widetilde D_N\right)^k=\sum_{j=0}^k \varphi_{q,\frac{x_{N+1}}{x_N}}(j|k)(D_N)^j(D_{N+1})^{k-j}
	\end{equation}
\end{prop}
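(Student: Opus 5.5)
The plan is to reduce the statement to the rank recursion of Theorem~\ref{thm:rank-recursion} and then induct on $k$, using the $q$-Pascal rule \eqref{eq:pascal}.

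\textbf{Base case $k=1$.} Apply Theorem~\ref{thm:rank-recursion} with $a=1$, $n=0$ in the $N+1$ variables $x_1,\dots,x_{N+1}$. Only the indices $i=N,N+1$ contribute, so
\[
D_{N+1}=\frac{x_{N+1}}{x_{N+1}-x_N}\,D_N+\frac{x_N}{x_N-x_{N+1}}\,\widetilde D_N .
\]
Put $\alpha=x_{N+1}/x_N$ and solve for $\widetilde D_N$. This gives
\[
\widetilde D_N=\alpha\,D_N+(1-\alpha)\,D_{N+1},
\]
which is exactly \eqref{eq: q-Whittaker power} for $k=1$, since $\varphi_{q,\alpha}(1\mid 1)=\alpha$ and $\varphi_{q,\alpha}(0\mid 1)=1-\alpha$.

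\textbf{Inductive step.} Assume \eqref{eq: q-Whittaker power} for $k$, and write
\[
\widetilde D_N^{\,k+1}=\widetilde D_N\cdot \widetilde D_N^{\,k}=\bigl(\alpha D_N+(1-\alpha)D_{N+1}\bigr)\sum_{j}\varphi_{q,\alpha}(j\mid k)\,D_N^jD_{N+1}^{k-j}.
\]
There are two sources of noncommutativity, and both must be controlled.

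\emph{(i) Moving $\alpha$ through shift operators.} Among the $\Gamma_i$ appearing in $D_N$ and $D_{N+1}$, only $\Gamma_N$ and $\Gamma_{N+1}$ act nontrivially on $\alpha$:
\[
\Gamma_N\,\alpha=q^{-1}\alpha\,\Gamma_N,\qquad \Gamma_{N+1}\,\alpha=q\,\alpha\,\Gamma_{N+1}.
\]
So after moving $\alpha$ to the left, the coefficient functions $\varphi_{q,\alpha}(j\mid k)$ get their $\alpha$ replaced by $q^{\pm1}\alpha$ in specific terms. I expect these rescalings to produce the factors $q^{k-j}$ and $q^{k+1-j}$ in \eqref{eq:pascal}.

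\emph{(ii) Reordering $D_{N+1}D_N^j$ into $D_N^{j'}D_{N+1}^{j''}$.} The aim is a quadratic exchange relation between $D_N$ and $D_{N+1}$ with coefficients in $\C(x)$. I would first test whether $D_N$ and $D_{N+1}$ commute outright: they are the first $q$-Whittaker Hamiltonians of nested ranks, and the Gelfand--Tsetlin-type structure suggests they might. To check this, compare the coefficient of each $\Gamma_i\Gamma_j$ on both sides of $D_ND_{N+1}=D_{N+1}D_N$. This is a partial-fraction identity of the same Lagrange-interpolation type used in the proof of Theorem~\ref{thm:rank-recursion}.

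If they do not commute, the fallback is a relation of the form $\widetilde D_N\,\alpha=\alpha\,(\ldots)$ coming from the fact that $\widetilde D_N$ does not involve $x_N$. Concretely, only the $\Gamma_{N+1}$ term of $\widetilde D_N$ rescales $\alpha$, and that term can be isolated as $(\widetilde D_N - D_{N-1}\text{-part})$. This can then be re-expressed through $D_N$ and $D_{N+1}$ via the $k=1$ formula.

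\textbf{Closing the induction.} Once (i) and (ii) give a normal-ordered expansion, collect the coefficient of $D_N^{j}D_{N+1}^{k+1-j}$. It should equal
\[
\bigl(1-\alpha q^{k-j}\bigr)\varphi_{q,\alpha}(j\mid k)+\alpha q^{k+1-j}\varphi_{q,\alpha}(j-1\mid k),
\]
which is $\varphi_{q,\alpha}(j\mid k+1)$ by \eqref{eq:pascal}. The normalization \eqref{eq:sum1} provides a consistency check: setting $q$-shifts trivially, the identity should reduce to $\widetilde D_N^{\,k+1}$ acting on constants.

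\textbf{Main obstacle.} The hard part is step (ii), the reordering together with the exact bookkeeping of the $q$-powers of $\alpha$ from step (i). If the direct left-multiplication does not match \eqref{eq:pascal}, I would instead multiply on the right, $\widetilde D_N^{\,k}\cdot\widetilde D_N$, where the new $\alpha$ already sits to the left of the new operator. As a last resort, I would verify the identity coefficientwise on the monomials $\prod_i\Gamma_i^{m_i}$: these coefficients are rational functions, and the identity can be checked by induction using \eqref{eq:pascal} on the $x_N,x_{N+1}$-dependence alone.
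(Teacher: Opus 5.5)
\textbf{Base case.} Your base case rests on a misreading of Theorem~\ref{thm:rank-recursion}. In $N+1$ variables with $a=1$, the $i=N$ term is the one that deletes $x_N$. So it is $\widetilde D_N$, not $D_N$, that carries the coefficient $\frac{x_{N+1}}{x_{N+1}-x_N}$. (The example \eqref{eq: affine relation} displays the same swap, although its own coefficient check uses the theorem's pairing.) The correct relation is
\[
D_{N+1}=\frac{x_{N+1}}{x_{N+1}-x_N}\,\widetilde D_N+\frac{x_N}{x_N-x_{N+1}}\,D_N,
\qquad\text{hence}\qquad
\widetilde D_N=\beta\,D_N+(1-\beta)\,D_{N+1},\quad \beta:=\frac{x_N}{x_{N+1}}=\alpha^{-1}.
\]

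With $D_N=\sum_i\prod_{j\neq i}\frac{x_i}{x_i-x_j}\Gamma_i$ as defined in the paper, the $k=1$ case of the statement is therefore false. For $N=1$ one has $\widetilde D_1=\Gamma_2$ and $D_1=\Gamma_1$, and the $\Gamma_2$-coefficient of $\alpha D_1+(1-\alpha)D_2$ is $-x_2/x_1$, not $1$.

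The failure persists for every $k$. Only the $j=0$ term contributes to the coefficient of $\Gamma_{N+1}^k$, and matching that coefficient forces $\varphi(0\mid k)=\prod_{m=0}^{k-1}(1-q^{-m}x_N/x_{N+1})$ rather than $(\alpha;q)_k$.

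The version that does check out replaces $\alpha$ by $x_N/x_{N+1}$ and $q$ by $q^{-1}$. I verified it for $N=1$ and all $k$, and for $N=2$, $k=2$. Equivalently, after $x_i\mapsto 1/x_i$, the printed formula is the correct one for the $t=0$ normalization $\sum_i\prod_{j\neq i}\frac{x_j}{x_j-x_i}\Gamma_i$. So the statement cannot be proved as written, and your $k=1$ computation only seemed to confirm it because of the swap.

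\textbf{Inductive step.} There is a second, independent gap here, which you rightly flag as the main obstacle but do not close.
\begin{itemize}
\item Step (i) cannot produce scalar factors such as $q^{k-j}$. $D_{N+1}$ contains both $\Gamma_N$ and $\Gamma_{N+1}$, which rescale $\alpha$ by $q^{-1}$ and $q$ respectively. So $D_{N+1}\alpha$ is not a scalar multiple of $\alpha D_{N+1}$, and likewise $D_N\alpha\neq\alpha D_N$.
\item Step (ii) fails outright: $D_N$ and $D_{N+1}$ do not commute. For $N=1$, the $\Gamma_1^2$-coefficients of $D_1D_2$ and $D_2D_1$ are $\frac{qx_1}{qx_1-x_2}$ and $\frac{x_1}{x_1-x_2}$.
\end{itemize}

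The paper's proof has the same skeleton as yours. It closes the induction by asserting exactly these three relations, \eqref{eq: commuting relations}, and then applying \eqref{eq:pascal}. None of those relations holds for these difference operators. In addition, \eqref{eq: q-whittaker level 1 recurrence} interchanges $D_N$ and $D_{N+1}$ relative to the proposition's own $k=1$ case. So that step cannot be borrowed.

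Your left-multiplication fallback is the right idea for $N=1$. There $\widetilde D_1=\Gamma_2$ commutes with $D_1=\Gamma_1$ and satisfies $\Gamma_2\,g(\beta)=g(\beta/q)\,\Gamma_2$. Write $\Phi^{(k)}_j$ for the coefficient of $\Gamma_1^jD_2^{k-j}$. Inserting the corrected $k=1$ relation between $\Gamma_1^j$ and $D_2^{k-j}$ gives
\[
\Phi^{(k+1)}_j(\beta)=q^{j-1}\beta\,\Phi^{(k)}_{j-1}(\beta/q)+(1-q^{j}\beta)\,\Phi^{(k)}_j(\beta/q),
\]
and $\varphi_{q^{-1},\beta}(j\mid k)$ satisfies this recursion.

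For $N\ge 2$ this argument breaks down. $\widetilde D_N$ does not commute with $D_N$, and it does not rescale $\beta$ uniformly, since its terms $\Gamma_i$ with $i<N$ leave $\beta$ fixed. A genuinely new exchange relation is therefore needed to normal-order $\widetilde D_N\,D_N^jD_{N+1}^{k-j}$, and the proposal does not supply one.
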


\begin{proof}
We proceed by induction on $k$. The base case $k=0$ results in the identity operator on both sides, which is trivial. Note that one can check directly by comparing the coefficients of the $q$-shift operators from both sides that,
\begin{equation}\label{eq: q-whittaker level 1 recurrence}
\widetilde D_N
=
\frac{x_{N+1}}{x_{N}}\,D_{N+1}(x_1,\dots,x_{N+1})
+
\left(1-\frac{x_{N+1}}{x_{N}}\right)D_N(x_1,\dots,x_{N}).
\end{equation}
Assume the identity holds for $k>0$, 
\begin{equation}\label{eq: IH}
(\widetilde D_N)^k=\sum_{j=0}^k \varphi_{q,\frac{x_{N+1}}{x_N}}(j\mid k)\,(D_N)^j(D_{N+1})^{k-j}.
\end{equation}
Consider the $k+1$ case, we have
\[
\begin{aligned}
	(\widetilde D_N)^{k+1}&=(\widetilde D_N)^k\widetilde D_N
=\left(\sum_{j=0}^k \varphi_{q,\alpha}(j\mid k)\,(D_N)^j(D_{N+1})^{k-j}\right)\left(\alpha D_{N+1}+(1-\alpha)D_N\right)\\
&=\sum_{j=0}^k\varphi_{q,\alpha}(j\mid k)\,(D_N)^j(D_{N+1})^{k-j}\alpha D_{N+1}
+\sum_{j=0}^k \varphi_{q,\alpha}(j\mid k)\,(D_N)^{j}(D_{N+1})^{k-j}(1-\alpha)D_N.
\end{aligned}
\]
Note that here because of the $q$-shift, one cannot immediately pass $\alpha$ and combine the operators in each of the summand. However, by the definition of $D_N$ and $D_{N+1}$, we have the following commutation relation:
\begin{equation}\label{eq: commuting relations}
	\alpha D_N=D_N\alpha, \quad q \alpha D_{N+1}=D_{N+1}\alpha, \quad D_ND_{N+1}=D_{N+1}D_N.
\end{equation}
Thus, \[
\begin{aligned}
	(\widetilde D_N)^{k+1}&=\sum_{j=0}^k \varphi_{q,\alpha}(j\mid k)\,(D_N)^j(D_{N+1})^{k-j}\alpha D_{N+1}+\sum_{j=0}^k \varphi_{q,\alpha}(j\mid k)\,(D_N)^{j}(D_{N+1})^{k-j}(1-\alpha)D_N\\
&=\sum_{j=0}^k \varphi_{q,\alpha}(j\mid k)\,q^{k-j}\alpha (D_N)^j(D_{N+1})^{k-j+1}+\sum_{j=0}^k \varphi_{q,\alpha}(j\mid k)\,(1-\alpha^{k-j})(D_N)^{j+1}(D_{N+1})^{k-j}.
\end{aligned}
\]
By reindexing and applying  \eqref{eq:pascal}, we have the desired equality.
\end{proof}

\section{Rank Recursion for Macdonald Operators}
\label{sect:macdonald_recursion}
\begin{defn}
\label{defn: original Macdo}
The \emph{original Macdonald operators} are defined as:
\begin{equation}
	\label{eq:original Macdo}
\mathcal{M}^{a}_{N}\left(x_1, \ldots, x_N\right):=t^{\binom{a}{2}}\sum_{\substack{I \subset \{1, \ldots, N\} \\ |I|=a}} \prod_{i \in I}\prod_{j \in \{1, \ldots, N\}\setminus I} \frac{t x_i-x_j}{x_i-x_j} \Gamma_I.
\end{equation}

\end{defn}
\begin{remark}
	In \cite{DiFrancescoKedem2019}, there is also a generalization of the definition above, called the \emph{generalized Macdonald operators}, which associates a polynomial $P$ (and its symmetrization) with the coefficients of the Macdonald operator $\mathcal M^a_N$. We will only consider the case $P=1$, which is the definition of the original Macdonald operator above.
\end{remark}

The Macdonald operators act on $\displaystyle\mathcal F_N:=\C_{q,t}(x_1, \ldots, x_N)^{\mathfrak S_N}$, the space of symmetric rational functions in $N$ variables, and they commute with each other. Let us first prove the following theorem, which will later motivate Theorem \ref{conj: rank recursion}.

\begin{lemma}
	\label{lem: three-term rank recursion}
	The original Macdonald operators satisfy the following three-term rank recursion: 
	\begin{equation}
	\label{eq:three-term rank recursion}
	\begin{aligned}
		\M^{1}_{N+1}(x_1,\dots, x_{N+1})&=\frac{tx_N-x_{N+1}}{x_{N}-x_{N+1}}\M^{1}_{N}(x_1,\dots, x_{N})+\frac{tx_{N+1}-x_N}{x_{N+1}-x_N}\M^{1}_{N}(x_1,\dots, x_{N-1}, x_{N+1})\\
		&-t\M^{1}_{N-1}(x_1,\dots, x_{N-1})
	\end{aligned}
	\end{equation}
\end{lemma}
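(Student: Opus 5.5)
Since every operator in the identity is a linear combination of the single shifts $\Gamma_i$, I would compare the coefficients of each $\Gamma_i$, $i=1,\dots,N+1$, on the two sides. This is the same strategy used for \eqref{eq: affine relation} and in the proof of Theorem \ref{thm:rank-recursion}. At level $a=1$ we have $t^{\binom{1}{2}}=1$, so the coefficient of $\Gamma_i$ in $\M^1_{M}(x_{s_1},\dots,x_{s_M})$ is $\prod_{j\neq i}\frac{tx_i-x_j}{x_i-x_j}$, where $j$ runs over the variable set $S=\{s_1,\dots,s_M\}$ and $i\in S$. Write $c(x,y):=\frac{tx-y}{x-y}$. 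Set $A_i:=\prod_{j\le N-1,\,j\ne i}c(x_i,x_j)$ for $i\le N-1$, and write $u=x_N$, $v=x_{N+1}$.

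\textbf{Case $i\le N-1$.} After dividing out the common factor $A_i$, the identity reduces to
\[
c(x_i,u)c(x_i,v)=c(u,v)\,c(x_i,u)+c(v,u)\,c(x_i,v)-t .
\]
This is a rational identity in three variables $y=x_i$, $u$, $v$. I would prove it by multiplying through by $(y-u)(y-v)(u-v)$, which gives a polynomial identity. A cleaner route is to view both sides as functions of $y$. Both sides tend to $t^2$ as $y\to\infty$; for the right-hand side this uses $c(u,v)+c(v,u)=t+1$, which gives $t(t+1)-t=t^2$. Both sides have at most simple poles at $y=u$ and $y=v$. So it suffices to match the residues. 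At $y=u$ the left-hand side has residue $(t-1)u\cdot c(u,v)$ and the right-hand side has residue $c(u,v)(t-1)u$, so they agree. The pole at $y=v$ follows by the $u\leftrightarrow v$ symmetry.

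\textbf{Case $i=N$.} The last term on the right-hand side does not contain $\Gamma_N$. The second term is an operator in the variables $x_1,\dots,x_{N-1},x_{N+1}$, so it contains no $\Gamma_N$ either. The left-hand coefficient is $c(u,v)\prod_{j\le N-1}c(u,x_j)$. The right-hand coefficient is $c(u,v)\prod_{j\le N-1}c(u,x_j)$ from the first term. These agree directly. The case $i=N+1$ is symmetric.

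The main obstacle is the three-variable rational identity in the first case. The rest is bookkeeping about which $\Gamma_i$ appear in which operator on the restricted variable sets. For that identity I would rely on the partial-fraction argument above: matching the limits at $\infty$ and the residues at $y=u$ and $y=v$ determines a rational function with only these poles. If the residue computation proves fiddly, the fallback is direct expansion after clearing denominators.
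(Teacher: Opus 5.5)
Your proposal is correct and follows essentially the same route as the paper: you compare the coefficient of each $\Gamma_i$, factor out the common product over $j\le N-1$ when $i\le N-1$, and note that for $i=N$ and $i=N+1$ only the first or second term contributes. The only difference is in one step. You justify the three-variable identity $c(y,u)c(y,v)=c(u,v)c(y,u)+c(v,u)c(y,v)-t$ by matching the limit as $y\to\infty$ and the residues at $y=u,v$. The paper simply states this simplification, so your argument is a valid way to fill in that step.
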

\begin{proof}
	We compare the coefficients of $\Gamma_i$ for any $i \in \{1, \dots, N\}$. Then, on the RHS of \eqref{eq:three-term rank recursion}, the coefficients of $\Gamma_i$ are given by:
	\begin{equation*}
	\begin{aligned}
		&\bullet \textrm{For any fixed }i \in \{1, \dots, N-1\}:\\
		&\left(\frac{tx_{N+1}-x_N}{x_{N+1}-x_N}\frac{tx_i-x_{N+1}}{x_i-x_{N+1}}\prod_{j \in \{1, \dots, N-1\}\backslash \{i\}}\frac{tx_i-x_j}{x_i-x_j}\right)+\left(\frac{tx_N-x_{N+1}}{x_{N}-x_{N+1}}\frac{tx_i-x_{N}}{x_i-x_N}\prod_{j \in \{1, \dots, N-1\}\backslash \{i\}}\frac{tx_i-x_j}{x_i-x_j}\right)\\&
		-t\left(
		\prod_{j \in \{1,\dots,N-1\}\backslash \{i\}}\frac{tx_i-x_j}{x_i-x_j}\right)\\
		&=\left(\frac{tx_i-x_{N+1}}{x_i-x_{N+1}}\frac{tx_{N+1}-x_N}{x_{N+1}-x_N}+\frac{tx_N-x_{N+1}}{x_{N}-x_{N+1}}\frac{tx_i-x_N}{x_i-x_N}-t\right)\prod_{j \in \{1, \dots, N-1\}\backslash \{i\}}\frac{tx_i-x_j}{x_i-x_j}\\
		&=\frac{(t x_i-x_N) (t x_i-x_{N+1})}{(x_i-x_N)(x_i-x_{N+1})}\prod_{j \in \{1, \dots, N\}\backslash \{i\}}\frac{tx_i-x_j}{x_i-x_j}
		\\
		&=\prod_{j \in \{1, \dots, N+1\}\backslash \{i\}}\frac{tx_i-x_j}{x_i-x_j}\\
		&\bullet \textrm{For }i=N:  \frac{tx_N-x_{N+1}}{x_{N}-x_{N+1}}\prod_{j \in \{1, \dots, N-1\}}\frac{tx_N-x_j}{x_N-x_j}=\prod_{j \in \{1, \dots, N+1\}\backslash \{N\}}\frac{tx_N-x_j}{x_N-x_j}\\
		&\bullet \textrm{For }i=N+1: \frac{tx_{N+1}-x_N}{x_{N+1}-x_N}\prod_{j \in \{1, \dots, N-1\}}\frac{tx_{N+1}-x_j}{x_{N+1}-x_j}=\prod_{j \in \{1, \dots, N+1\}\backslash \{N+1\}}\frac{tx_{N+1}-x_j}{x_{N+1}-x_j}
	\end{aligned}
\end{equation*}
So we have all the coefficients of $\Gamma_i$ for $i \in \{1, \dots, N+1\}$ on the RHS of \eqref{eq:three-term rank recursion} are the same as the coefficients of $\Gamma_i$ for $i \in \{1, \dots, N+1\}$ on the LHS of \eqref{eq:three-term rank recursion}.
\end{proof}

\begin{remark}
	There are a couple of features in the previous proof that can be generalized:
	\begin{enumerate}
		\item Note that in the previous proof, the two special cases are $i=N$ and $i=N+1$, where the coefficient of $\Gamma_i$ on the RHS of \eqref{eq:three-term rank recursion} only comes from the first two terms. This is also true if we pick any subset of two elements from $\{1, \dots, N+1\}$, and the coefficient of $\Gamma_i$ for $i$ in this subset on the RHS of \eqref{eq:three-term rank recursion} only comes from the first two terms after relabeling the set of variables being acted on.
		\item For $a>1$, we have Lemma \ref{lem: rank recursion maximal case rank a>1}.
	\end{enumerate}
\end{remark}
Before showing the proof of Lemma \ref{lem: rank recursion maximal case rank a>1}, we first state some propositions that can be checked immediately from the definition of the Macdonald operators, along with some useful notations that we will use in the proof of Lemma \ref{lem: rank recursion maximal case rank a>1}.

Fix the following notation: 
\begin{itemize}
	\item $[\Gamma_K]\mathcal M_{N}^a(x_1, \ldots, x_N)$ denotes the coefficients of $\Gamma_K=\prod_{k \in K} \Gamma_k$ in the expansion of the Macdonald operator $\mathcal M_{N}^a(x_1, \ldots, x_N)$. 
	\item $[1, N]:=\{1, \ldots, N\}$, $[a,b]=\{a, a+1, \dots, b\}$ for $a \leq b$ 
\end{itemize}

\begin{prop}
	\label{prop:coefficients extracting}
	For $N>M$, consider the two Macdonald operators
	\[\mathcal M_{N}^a(x_{1},\ldots,x_{M},\ldots, x_{N}), \quad \mathcal M_{M+m}^a(x_{1}, \ldots, x_{M}, x_{i_1}, x_{i_2}\dots, x_{i_m})\]
	then for a set $K \subset[1,M] \sqcup \{i_1, \ldots, i_m\}$ where $\{i_1, \ldots, i_m\} \subset [M+1, N]$, we have:
	\begin{equation}
	\label{eq:coefficients extracting}
	\begin{aligned}
	&\qquad[\Gamma_K]\mathcal M_{N}^a(x_{1},\ldots,x_{M},\ldots, x_{N})\\
	&=\left([\Gamma_K]\mathcal M_{M+m}^a(x_{1}, \ldots, x_{M}, x_{i_1}, \dots, x_{i_m})\right)\prod_{\substack{k \in K\\ \ell \in [M+1, N]\backslash\{i_1, \ldots, i_m\}}}\frac{t x_k-x_\ell}{x_k-x_\ell}
	\end{aligned}
	\end{equation}
\end{prop}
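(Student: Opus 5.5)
The plan is to read off both coefficients directly from Definition \ref{defn: original Macdo} and compare them factor by factor. First I will fix the implicit assumption $|K|=a$. This is the only case in which $\Gamma_K$ occurs in either operator; if $|K|\neq a$, both sides vanish and there is nothing to prove. I also note that $K$ is a subset of the variable set of the smaller operator. So in both operators the shift $\Gamma_K$ comes from exactly one summand, namely the one with $I=K$. Here $\Gamma_k$ acts on the same variable $x_k$ in both cases, since the operators act on the named variables $x_k$ and not on positional slots.

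The two coefficients are then
\[
[\Gamma_K]\mathcal M_N^a(x_1,\ldots,x_N)=t^{\binom a2}\prod_{k\in K}\prod_{\ell\in[1,N]\setminus K}\frac{tx_k-x_\ell}{x_k-x_\ell}
\]
and
\[
[\Gamma_K]\mathcal M_{M+m}^a(x_1,\ldots,x_M,x_{i_1},\ldots,x_{i_m})=t^{\binom a2}\prod_{k\in K}\prod_{\ell\in S\setminus K}\frac{tx_k-x_\ell}{x_k-x_\ell},
\]
where $S=[1,M]\sqcup\{i_1,\ldots,i_m\}$.

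The key step is the set identity
\[
[1,N]\setminus K=(S\setminus K)\sqcup\bigl([M+1,N]\setminus\{i_1,\ldots,i_m\}\bigr),
\]
which holds because $K\subset S$ and $[1,N]=S\sqcup([M+1,N]\setminus\{i_1,\ldots,i_m\})$. Using this, I split the product over $\ell\in[1,N]\setminus K$ in the first coefficient into the product over $S\setminus K$ and the product over the complementary indices. The first piece, together with $t^{\binom a2}$, is exactly the coefficient in the smaller operator. The second piece is precisely the extra factor in \eqref{eq:coefficients extracting}, which gives the identity.

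I do not expect a genuine obstacle here; the statement is bookkeeping. The only point needing care is to make sure that relabeling the variables of $\mathcal M^a_{M+m}$ (listing $x_{i_1},\ldots,x_{i_m}$ in positions $M+1,\ldots,M+m$) does not change which coefficient corresponds to $\Gamma_K$. It does not, because the coefficients depend only on the set of variables and not on their order, and the prefactor $t^{\binom a2}$ is the same on both sides.
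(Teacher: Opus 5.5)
Your proposal is correct and is the same approach as the paper. The paper gives no written proof and says the proposition can be checked immediately from Definition~\ref{defn: original Macdo}; your argument is that check. You read off the single $I=K$ summand in each operator and split $[1,N]\setminus K=(S\setminus K)\sqcup\bigl([M+1,N]\setminus\{i_1,\ldots,i_m\}\bigr)$, with the common prefactor $t^{\binom{a}{2}}$ cancelling on both sides.
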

The proof for the lemma below relies on some technical properties of the coefficients of the Macdonald operators which  we show in the Appendix \ref{appendix1}. 
\begin{lemma}
	\label{lem: rank recursion maximal case rank a>1}
	For $a>1$, 
	\begin{equation}	
		\label{eq: rank recursion maximal case rank a>1}
	\mathcal M_{N}^a(x_1, \ldots, x_N)=\sum_{\substack{I \subseteq [N-a,N] \\ I \neq \emptyset}}(-1)^{|I|-1}t^{\binom{|I|}{2}}\prod_{i \in I}\prod_{ j\in [N-a,N]\backslash I}\frac{tx_j-x_i}{x_j-x_i}\mathcal M_{N-|I|}^a(\{x_n\}_{n \in [1,N]\backslash I})
	\end{equation}
\end{lemma}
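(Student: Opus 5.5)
We compare the coefficients of $\Gamma_K$ on both sides for each $K\subset[1,N]$ with $|K|=a$, in the spirit of the proof of Lemma \ref{lem: three-term rank recursion}. Write $S=[N-a,N]$; note $|S|=a+1>|K|$, so $S\setminus K\neq\emptyset$. The term indexed by $I$ on the right-hand side acts on the variables $\{x_n\}_{n\notin I}$. It can only contribute to $\Gamma_K$ if $I\cap K=\emptyset$, so the sum restricts to nonempty $I\subseteq S\setminus K$. For such $I$ we use Proposition \ref{prop:coefficients extracting} with the removed variables $\ell\in I$:
\[
[\Gamma_K]\mathcal M^a_N = \Big([\Gamma_K]\mathcal M^a_{N-|I|}(\{x_n\}_{n\notin I})\Big)\prod_{k\in K,\ \ell\in I}\frac{tx_k-x_\ell}{x_k-x_\ell}.
\]
Dividing by $[\Gamma_K]\mathcal M^a_N$ then reduces the lemma to the rational identity
\[
1=\sum_{\emptyset\neq I\subseteq S\setminus K}(-1)^{|I|-1}t^{\binom{|I|}{2}}\prod_{i\in I}\Big(\prod_{j\in S\setminus I}\frac{tx_j-x_i}{x_j-x_i}\Big)\prod_{k\in K,\ i\in I}\frac{x_k-x_i}{tx_k-x_i}.
\]
Here $t^{\binom a2}$ is common to both sides.

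Split $S=A\sqcup B$ with $A=S\cap K$ and $B=S\setminus K$, and set $C=K\setminus S$, so $|A|+|C|=a$ and $|A|+|B|=a+1$, hence $|C|=|B|-1$. For $i\in I$ and $j\in A\subset K$, the factors $\frac{tx_j-x_i}{x_j-x_i}$ and $\frac{x_j-x_i}{tx_j-x_i}$ cancel. The identity therefore reduces to one involving only the variables $y_b$ ($b\in B$, $|B|=m+1$) and $z_c$ ($c\in C$, $|C|=m$):
\[
\sum_{\emptyset\neq I\subseteq B}(-1)^{|I|-1}t^{\binom{|I|}{2}}\prod_{i\in I,\ j\in B\setminus I}\frac{ty_j-y_i}{y_j-y_i}\prod_{i\in I,\ c\in C}\frac{z_c-y_i}{tz_c-y_i}=1.
\]
This is the analogue of the Lagrange-interpolation identity in Theorem \ref{thm:rank-recursion}. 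The exact count $|C|=|B|-1$ is what makes it hold, and it is this identity that I expect to be the content of Appendix \ref{appendix1}.

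To prove it, I would move the $I=\emptyset$ term (which equals $1$) to the other side and show $F:=\sum_{I\subseteq B}(-1)^{|I|}t^{\binom{|I|}2}\prod\cdots\prod\cdots=0$. View $F$ as a rational function of one variable $z=z_{c_0}$.

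\emph{Poles.} The only possible poles in $z$ are at $z=y_i/t$. The residue there pairs each $I\ni i$ with another subset, and the pairing should make the residues cancel. Since $F$ is also symmetric in the $y$'s, the apparent poles at $y_i=y_j$ cancel as well.

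\emph{Value.} As $z\to\infty$ the factor $\frac{z-y_i}{tz-y_i}\to t^{-1}$, which reduces the claim to the same statement with $|C|$ decreased by one, at the cost of extra powers $t^{-|I|}$. This suggests strengthening the induction hypothesis to a family of identities with a parameter $|C|\le |B|-1$. Alternatively one can specialize $z=y_b$ for some $b\in B$: then every $I\ni b$ vanishes, and the rest collapses to the identity for $B\setminus\{b\}$, $C\setminus\{z\}$, exactly mirroring the "$y_a=z_{a+1}$" induction step in Theorem \ref{thm:rank-recursion}.

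Combining the two, $F$ is a rational function in $z$ with no poles and bounded at infinity, hence constant, and the specialization shows this constant is $0$. The base case $m=0$ ($|B|=1$, $C=\emptyset$) is immediate. The main obstacle is the residue cancellation at $z=y_i/t$. There I would first try the explicit pairing $I\leftrightarrow I\triangle\{j\}$ and check that the $t^{\binom{|I|}2}$ weights and the cross-ratios $\frac{ty_j-y_i}{y_j-y_i}$ match. If that fails, I would instead express the sum as a Cauchy-determinant expansion, as the abstract suggests.
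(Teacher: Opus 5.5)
Your reduction is correct. Comparing $\Gamma_K$-coefficients, dividing by $[\Gamma_K]\mathcal M^a_N$, and cancelling the factors indexed by $A=S\cap K$ turns the lemma into the claim that $F(B,C)=0$ whenever $|C|=|B|-1$, where
\[
F(B,C):=\sum_{I\subseteq B}(-1)^{|I|}t^{\binom{|I|}{2}}\prod_{i\in I,\,j\in B\setminus I}\frac{ty_j-y_i}{y_j-y_i}\prod_{i\in I,\,c\in C}\frac{z_c-y_i}{tz_c-y_i}.
\]
This is the paper's Lemma A.4 with $N=|B|$, $M=|C|$, divided by its right-hand side. You are right that the count matters. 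For $|B|=|C|=1$ one gets $F=1-\frac{z-y}{tz-y}\neq 0$, so Lemma A.4 as stated for all $M,N\ge 1$ is too strong. What the proof of the lemma actually needs is the case $M=N-1$ with external set $K\setminus[N-a,N]$, exactly as in your reduction. Your specialization $z=y_b$ is also correct, and unlike the paper's evaluation at $z_M=0$ it stays inside the family $|C|=|B|-1$.

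The gap is the residue step. You leave it open, and the mechanism you propose would not work. Only subsets $I\ni i$ have a pole at $z=y_i/t$, and these terms do not cancel in pairs $I\leftrightarrow I\triangle\{j\}$. Already for $|B|=3$ the residue consists of four terms depending on the remaining variable $z_{c_1}$, and only their total vanishes.

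The missing observation is that the residue is a smaller instance of the same identity. Write $I=\{i\}\sqcup I'$ with $I'\subseteq B':=B\setminus\{i\}$. Then $t^{\binom{|I|}{2}}=t^{\binom{|I'|}{2}}t^{|I'|}$. The factors $\frac{z-y_{i'}}{tz-y_{i'}}$ for $i'\in I'$, evaluated at $z=y_i/t$, give $t^{-|I'|}\prod_{i'\in I'}\frac{ty_{i'}-y_i}{y_{i'}-y_i}$, which absorbs the $t^{|I'|}$. This product combines with the cross-ratios $\prod_{j\in B'\setminus I'}\frac{ty_j-y_i}{y_j-y_i}$ carrying $i$ into the $I'$-independent product $\prod_{j\in B'}\frac{ty_j-y_i}{y_j-y_i}$. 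The remaining cross-ratios are exactly those of $I'$ inside $B'$. Hence
\[
\operatorname*{Res}_{z=y_i/t}F(B,C)=\frac{(t-1)\,y_i}{t^{2}}\prod_{j\in B'}\frac{ty_j-y_i}{y_j-y_i}\prod_{c\in C\setminus\{c_0\}}\frac{z_c-y_i}{tz_c-y_i}\;F\bigl(B',C\setminus\{c_0\}\bigr),
\]
which vanishes by the induction hypothesis, since $|C\setminus\{c_0\}|=|B'|-1$. This is the computation the paper carries out in the proof of Lemma A.4, where the residue at $z_M=x_r$ reduces $(N,M)$ to $(N-1,M-1)$.

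With this step your argument closes. $F$ has no poles in $z$ and is bounded at infinity, hence constant, and your specialization $z=y_b$ shows the constant is $0$. Neither the strengthened hypothesis from $z\to\infty$ nor a Cauchy determinant is needed.
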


\begin{proof}
	We would like to show that for some subset $K \subset [1,N]$ with $|K|=a$, the coefficient of $\Gamma_K$ on the RHS of \eqref{eq: rank recursion maximal case rank a>1} is the same as the coefficient of $\Gamma_K$ on the LHS of \eqref{eq: rank recursion maximal case rank a>1}. Assuming that $K \cap [N-a,N]=\{i_1, \dots, i_m\}$ (which can possibly be empty) and apply Proposition \ref{prop:coefficients extracting} to the LHS, we need to show:
\begin{equation}
	\label{eq: K intersect tail}
	\begin{aligned}
	&\qquad\left([\Gamma_K]\mathcal M^a_{N-a+m-1}\left(x_1, \dots, x_M, x_{i_1}, \dots, x_{i_m}\right)\right)\prod_{k \in K}\prod_{\ell \in [N-a,N]\backslash \{i_1, \dots, i_m\}}\frac{t x_k-x_\ell}{x_k-x_\ell}\\
&=\sum_{\substack{I \subseteq [N-a,N], \\ I \neq \emptyset}}(-1)^{|I|-1}t^{\binom{|I|}{2}}\prod_{i \in I}\prod_{ j\in [N-a, N]\backslash I}\frac{tx_j-x_i}{x_j-x_i}[\Gamma_K]\mathcal M^a_{N-|I|}(\{x_n\}_{n \in [1,N]\backslash I})
	\end{aligned}
\end{equation}
Note that on the RHS, the summands that contain $\Gamma_K$ in $\mathcal M^a_{N-|I|}(\{x_n\}_{n \in [1,N]\backslash I})$ are those indexed by $I \subseteq [N-a,N]\backslash \{i_1, \dots, i_m\}$ (otherwise $\Gamma_K$ does not even exist on the RHS). Then, we have: 
\begin{equation}
	\begin{aligned}
		&\qquad [\Gamma_K]\mathcal M^a_{N-|I|}(\{x_n\}_{n \in [1,N]\backslash I})\\
		&=[\Gamma_K]\mathcal M^a_{N-a+m-1}(\{x_1, \dots, x_{N-a-1}, x_{i_1}, \dots, x_{i_m}\})\prod_{\substack{k \in K\\ \ell \in [N-a,N]\backslash (I \sqcup \{i_1, \dots, i_m\})}}\frac{t x_k-x_\ell}{x_k-x_\ell}
	\end{aligned}
\end{equation}
Define $\widetilde I=I \sqcup \{i_1, \dots, i_m\}$, once again, it boils down to show the following identity:
\begin{equation}
	\label{eq: K intersect tail last factor}
\begin{aligned}
	&\qquad\left(\sum_{\substack{I \subseteq [N-a,N]\backslash \{i_1,\dots,i_m\} \\ I \neq \emptyset}}(-1)^{|I|-1}t^{\binom{|I|}{2}}\prod_{\substack{i \in I,k\in K, \\ \alpha\neq i,k,\\ \alpha \in [N-a,N]\backslash \widetilde{I}}}\frac{tx_\alpha-x_i}{x_\alpha-x_i}\frac{tx_k-x_\alpha}{x_k-x_\alpha}\right)+(-1)^{(a-1)}t^{\binom{a}{2}}\\
	&=\prod_{\substack{k \in K\\ \beta \in [N-a,N]\backslash \{i_1, \dots, i_m\}}}\frac{t x_k-x_\beta}{x_k-x_\beta}
\end{aligned}
\end{equation}
Define $S:=[N-a,N]\backslash \{i_1, \dots, i_m\}$ and apply Lemma \ref{lemma:appendix lemma 5} to the disjoint sets $(S,K)$ to finish the proof of the relation.
\end{proof}

Note that for any subset $K = \{k_0, \dots, k_{a}\} \subset \{1,\ldots,  N\}$ with $k_0<k_1<\dots<k_{a}$, by making the change of variables  $N-a+i\mapsto k_i$ for $i=0, \dots, a$, the following theorem is a direct generalization of Lemma \ref{lem: rank recursion maximal case rank a>1}.

\begin{thm}
	\label{conj: rank recursion}
	For any non-empty subset $K = \{k_0, \dots, k_{a}\} \subset \{1,\ldots,  N\}$ with $k_0<k_1<\dots<k_{a}$, we have:
	\begin{equation}
		\label{eq: rank recursion general}
		\mathcal M^a_{N}(x_1,\dots, x_N)=\sum_{\substack{J \subseteq K \\ J \neq \emptyset}}(-1)^{|J|-1}t^{\binom{|J|}{2}}\left(\prod_{j \in K \backslash J}\prod_{i \in J}\frac{tx_j-x_i}{x_j-x_i}\right)\mathcal M^a_{N-|J|}(\{x_k, k \in [1, N]\backslash J\})
	\end{equation}
\end{thm}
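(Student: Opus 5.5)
The plan is to deduce the theorem from Lemma \ref{lem: rank recursion maximal case rank a>1} (and from Lemma \ref{lem: three-term rank recursion} when $a=1$) using the $\mathfrak S_N$-symmetry of the Macdonald operators under relabeling of variables. For any permutation $\sigma\in\mathfrak S_N$ let $\sigma$ act on operators by simultaneously permuting the variables and the shift operators: $x_i\mapsto x_{\sigma(i)}$ and $\Gamma_i\mapsto\Gamma_{\sigma(i)}$. Definition \ref{defn: original Macdo} sums over all $a$-subsets $I\subset[1,N]$ with coefficients depending only on the sets $I$ and $[1,N]\setminus I$. Hence $\mathcal M^a_N(x_1,\dots,x_N)$ is invariant under this action.

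More generally, $\mathcal M^a_{N-|J|}(\{x_k\}_{k\in[1,N]\setminus J})$ depends only on the unordered set of variables it acts on. Applying $\sigma$ to it therefore gives $\mathcal M^a_{N-|J|}(\{x_{\sigma(k)}\}_{k\in[1,N]\setminus J})$, which equals $\mathcal M^a_{N-|\sigma(J)|}(\{x_k\}_{k\in[1,N]\setminus\sigma(J)})$.

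First I fix $K=\{k_0<\dots<k_a\}$. I choose $\sigma$ with $\sigma(N-a+i)=k_i$ for $i=0,\dots,a$, extended arbitrarily (for instance order-preservingly) to a bijection $[1,N-a-1]\to[1,N]\setminus K$. Then I apply $\sigma$ to both sides of \eqref{eq: rank recursion maximal case rank a>1}. The left side is invariant. On the right side, the sum over nonempty $I\subseteq[N-a,N]$ becomes a sum over nonempty $J=\sigma(I)\subseteq K$, with $|J|=|I|$. The prefactor $\prod_{i\in I}\prod_{j\in[N-a,N]\setminus I}\frac{tx_j-x_i}{x_j-x_i}$ becomes $\prod_{i\in J}\prod_{j\in K\setminus J}\frac{tx_j-x_i}{x_j-x_i}$. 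Each operator becomes $\mathcal M^a_{N-|J|}(\{x_k\}_{k\in[1,N]\setminus J})$. This is exactly \eqref{eq: rank recursion general}.

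Lemma \ref{lem: rank recursion maximal case rank a>1} is stated only for $a>1$, so the case $a=1$ needs separate treatment. Here $K$ has two elements, and the claimed identity is
\[
\mathcal M^1_N=\frac{tx_{k_1}-x_{k_0}}{x_{k_1}-x_{k_0}}\mathcal M^1_{N-1}(\hat x_{k_0})+\frac{tx_{k_0}-x_{k_1}}{x_{k_0}-x_{k_1}}\mathcal M^1_{N-1}(\hat x_{k_1})-t\,\mathcal M^1_{N-2}(\hat x_{k_0},\hat x_{k_1}),
\]
where $\hat x_k$ denotes omission of $x_k$. This is Lemma \ref{lem: three-term rank recursion} with $N+1$ replaced by $N$, transported by the same relabeling. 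Equivalently, the proof of Lemma \ref{lem: rank recursion maximal case rank a>1} via Lemma \ref{lemma:appendix lemma 5} goes through verbatim for $a=1$.

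The main point needing care is well-definedness of the relabeling action. The operators act on all rational functions, not only symmetric ones, so the conjugation must permute the shifts $\Gamma_i$ together with the variables. I would check that $\sigma$ is implemented as conjugation by the variable-permutation operator $P_\sigma f(x)=f(x_{\sigma(1)},\dots,x_{\sigma(N)})$. Conjugation is an algebra automorphism: it sends multiplication by $g(x)$ to multiplication by $g(x_\sigma)$ and $\Gamma_i$ to $\Gamma_{\sigma(i)}$ (with the inverse convention as appropriate). It therefore preserves operator identities. Everything else is bookkeeping of index sets.
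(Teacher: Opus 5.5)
Your proposal is correct and follows the paper's own argument: the theorem is obtained from Lemma \ref{lem: rank recursion maximal case rank a>1} by the relabeling $N-a+i\mapsto k_i$, and you justify that relabeling more carefully than the paper does, as conjugation by the variable-permutation operator (which sends $x_i\mapsto x_{\sigma(i)}$ and $\Gamma_i\mapsto\Gamma_{\sigma(i)}$). You also handle $a=1$ separately via Lemma \ref{lem: three-term rank recursion}, which fills a small gap in the paper's one-line argument, since Lemma \ref{lem: rank recursion maximal case rank a>1} is only stated for $a>1$.
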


\begin{remark}
	Note that the coefficients $\displaystyle t^{\binom{|J|}{2}}\prod_{j \in K \backslash J}\prod_{i \in J}\frac{tx_j-x_i}{x_j-x_i}$ in Theorem \ref{conj: rank recursion} admits a determinantal formula. Specifically, for any non-empty subset $J \subseteq \{1,\ldots,m\}$, we have the following identity:
$$\det(N_J) = t^{\binom{|J|}{2}} \prod_{i \in J}\,\prod_{j \notin J} f_{ij}.$$
where $\displaystyle f_{ij} = \frac{t x_j - x_i}{x_j - x_i} \quad (i \neq j)$ and $N_J$ is the $|J| \times |J|$ matrix defined by
$$
N_{ii} = \prod_{k \neq i} f_{ik}, \qquad
N_{ij} = (f_{ij}-1)\prod_{k \notin \{i,j\}} f_{ik} \quad (i \neq j).
$$
and thus Eq. \eqref{eq: rank recursion general} can be written as 
\begin{equation}
	\mathcal M^a_{N}(x_1,\dots, x_N)=\sum_{\substack{J \subseteq K \\ J \neq \emptyset}}(-1)^{|J|-1}\det(N_J)\mathcal M^a_{N-|J|}(\{x_k, k \in [1, N]\backslash J\})
\end{equation}
\end{remark}

Similar to the $q$-Whittaker case, the Macdonald operators also have a formula for the $k$-th power. From here on, we denote $\bx=(x_1, \dots, x_N)$. We start with the following proposition.
\begin{prop}
	\label{prop: q-shift identity}
	Let $J,K \subseteq [1,N]$ and let $c(\bx), d(\bx)$ be two scalar rational functions in $x_1, \dots, x_N$. Define the operators
\[
C_J:=c\Gamma_J
\qquad
D_K:=d\Gamma_K.
\]
Then, 
$$D \circ C = \bigl(d(\bx) \cdot c(\bx)(q^K \bx)\bigr) \cdot \Gamma_{J+K}.$$
where $q^J \bx=(x_1, \dots,qx_j, \dots, x_N)$ is the vector obtained by multiplying $x_j$ by $q$ for each $j \in J$.
\end{prop}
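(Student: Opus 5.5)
The plan is to prove the identity by letting both sides act on an arbitrary test function $f \in \C_{q,t}(x_1,\dots,x_N)$ and comparing the results. The only inputs are two basic facts about the $q$-shift operators. First, $\Gamma_j$ is an algebra automorphism of the field of rational functions, $\Gamma_j(gh)=(\Gamma_j g)(\Gamma_j h)$, since it is just the substitution $x_j\mapsto qx_j$. Second, the $\Gamma_j$ commute with one another, so that $\Gamma_K\Gamma_J$ is the substitution $x_i\mapsto q^{m_i}x_i$, where $m_i=\mathbf 1_{i\in J}+\mathbf 1_{i\in K}$. This is exactly the operator the statement denotes $\Gamma_{J+K}$, with $J+K$ read as a multiset when $J\cap K\neq\emptyset$.

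The key steps are as follows.

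\textbf{Step 1.} Write out the action of the composite on $f$:
\[
(D\circ C)f=d(\bx)\,\Gamma_K\bigl(c(\bx)\,\Gamma_J f(\bx)\bigr).
\]

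\textbf{Step 2.} Use multiplicativity of $\Gamma_K$ to split the inner product:
\[
\Gamma_K\bigl(c\,\Gamma_J f\bigr)=\bigl(\Gamma_K c\bigr)\,\bigl(\Gamma_K\Gamma_J f\bigr).
\]
Here $\Gamma_K c$ is by definition $c(q^K\bx)$, where $q^K\bx$ multiplies $x_k$ by $q$ for each $k\in K$.

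\textbf{Step 3.} Identify $\Gamma_K\Gamma_J f$ with $\Gamma_{J+K}f$. This follows because the substitutions commute and compose by adding exponents.

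\textbf{Step 4.} Combine the pieces to get
\[
(D\circ C)f=d(\bx)\,c(q^K\bx)\,\Gamma_{J+K}f.
\]
Since $f$ was arbitrary, this gives the operator identity in the statement.

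The argument has no real obstacle. The one point needing care is notational. The statement writes $c(\bx)(q^K\bx)$, and this must be read as the rational function $c$ evaluated at the shifted argument. Also, $\Gamma_{J+K}$ must be interpreted with multiplicities: it equals $\Gamma_{J\cup K}\Gamma_{J\cap K}$. When $J\cap K\neq\emptyset$, indices in the overlap are shifted by $q^2$. I would add a sentence making this convention explicit, so that later applications, such as computing powers of $\mathcal M^a_N$ where the index sets overlap, are justified.
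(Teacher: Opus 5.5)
Your proposal is correct and follows essentially the same route as the paper: apply $D_K\circ C_J$ to an arbitrary rational function $f$, use that $\Gamma_K$ acts by the substitution $x_k\mapsto qx_k$ (so $\Gamma_K(c\,\Gamma_J f)=c(q^K\bx)\,f(q^{J+K}\bx)$), and read off the operator identity. Your explicit remark that $\Gamma_{J+K}$ must be read with multiplicities, i.e.\ as $\Gamma_{J\cup K}\Gamma_{J\cap K}$, is a useful clarification that the paper leaves implicit.
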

\begin{proof}
	Take a rational function $f(\bx)$. By definition, we have
\[(C_Jf)(\bx)=c(\bx)f(q^J \bx)\]

Then, 
\begin{align*}
(D_K\circ C_J)(f)(\bx)&=D_K(C_Jf)(\bx)=d(\bx)(C_Jf)(q^\bx)=d(\bx)c(q^\bx)f(q^{J+K}\bx)\\
&=\bigl(d(\bx) \cdot c(q^K \bx)\bigr) \cdot \Gamma_{J+K}(f(\bx)).
\end{align*}
\end{proof}
One can generalize the previous proposition for a set of subsets $\bJ=\{J_1, \dots, J_m\}$ by repeating the same argument. Specifically, 
\begin{align*}
	(d\Gamma_K)\circ (c\Gamma_\bJ)(f)(\bx)=d(\bx)c(q^K \bx)\Gamma_K\Gamma_\bJ f(\bx).
\end{align*}
Let $\displaystyle A_i(\bx)=\prod_{j\neq i}\frac{tx_i-x_j}{x_i-x_j}$, then we have $\displaystyle \mathcal M^1_N=\sum_{i=1}^N A_i(\bx)\Gamma_i$. We are now ready to derive the formula for the $k$-th power of the Macdonald operator $\mathcal M^a_N$.

\begin{lemma}
	\label{lem: 1st Macdonald power formula}
	For any $k \in \mathbb N$, let $m_{<l}(j):=|\{i_r=j\mid i_r\in \{i_1, \dots, i_{l-1}\}\}|$, i.e., the number of times $j$ appears in $\{i_1, \dots, i_{l-1}\}$.
	\begin{equation}
		\label{eq: Macdonald power formula}
		(\mathcal M^1_N)^k=\sum_{i_1, \dots,i_k=1}^N\left(\prod_{l=1}^k\prod_{j \neq i_l}\frac{tq^{\mcard{l}{i_l}}x_{i_l}-q^{\mcard{l}{j}}x_j}{q^{\mcard{l}{i_l}}x_{i_l}-q^{\mcard{l}{j}}x_j}\right)\Gamma_{i_1}\dots \Gamma_{i_k}
	\end{equation}
\end{lemma}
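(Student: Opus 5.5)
The plan is to prove \eqref{eq: Macdonald power formula} by induction on $k$, using the composition rule of Proposition \ref{prop: q-shift identity} (in its generalized form for a product of shifts $\Gamma_\bJ$). For $k=1$ all multiplicities $\mcard{1}{j}$ vanish, so the right-hand side is $\sum_{i_1} A_{i_1}(\bx)\Gamma_{i_1}=\mathcal M^1_N$, and the base case is immediate. (The case $k=0$ is the identity, with the empty product equal to $1$.)

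For the inductive step we choose to write $(\mathcal M^1_N)^{k+1}=(\mathcal M^1_N)^k\circ \mathcal M^1_N$. We will also check that the other ordering works. Expanding $\mathcal M^1_N=\sum_{i}A_i(\bx)\Gamma_i$, we would need to pass $A_i$ through $\Gamma_{i_1}\cdots\Gamma_{i_k}$. This shifts $A_i$ by $q^{m(j)}$ in each variable $x_j$, where $m(j)$ is the number of occurrences of $j$ among $i_1,\dots,i_k$. That shifted factor naturally sits at position $l=k+1$, with $i_{k+1}=i$. Concretely, in the ordering $D\circ C$ with $D=c_{i_1\cdots i_k}(\bx)\Gamma_{i_1}\cdots\Gamma_{i_k}$ (the inductive expression) and $C=A_{i_{k+1}}\Gamma_{i_{k+1}}$, Proposition \ref{prop: q-shift identity} gives the coefficient $c_{i_1\cdots i_k}(\bx)\,A_{i_{k+1}}(q^{K}\bx)$. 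Here $q^K\bx$ means $x_j\mapsto q^{\mcard{k+1}{j}}x_j$.

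Next we verify that $A_{i_{k+1}}(q^{K}\bx)$ is precisely the $l=k+1$ factor
\[
\prod_{j\neq i_{k+1}}\frac{tq^{\mcard{k+1}{i_{k+1}}}x_{i_{k+1}}-q^{\mcard{k+1}{j}}x_j}{q^{\mcard{k+1}{i_{k+1}}}x_{i_{k+1}}-q^{\mcard{k+1}{j}}x_j}.
\]
This follows directly by substituting into $A_i(\bx)=\prod_{j\neq i}\frac{tx_i-x_j}{x_i-x_j}$. The factors for $l\le k$ are unchanged, since they come from the inductive hypothesis multiplied on the left with no shift. Summing over $i_{k+1}$ then gives the formula for $k+1$. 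Because the $\Gamma_i$ commute, we may write $\Gamma_{i_1}\cdots\Gamma_{i_{k+1}}$ in any order.

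The main obstacle is bookkeeping of the composition order. In Proposition \ref{prop: q-shift identity} the left operator's coefficient is unshifted, while the right operator's coefficient is shifted by the left operator's $\Gamma$'s. So the inductive hypothesis must be placed on the left and the new factor $\mathcal M^1_N$ on the right, so that the new factor picks up the counts $\mcard{k+1}{\cdot}$ of earlier indices. If we instead used $\mathcal M^1_N\circ(\mathcal M^1_N)^k$, the earlier factors would all be shifted by the new index, and we would have to relabel $i_l\mapsto i_{l+1}$, $i_{k+1}\mapsto i_1$. That relabeling reproduces the same sum, so either route works, but the first is the cleaner one.
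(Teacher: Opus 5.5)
Your proof is correct and is essentially the paper's argument: induction on $k$ using the composition rule of Proposition~\ref{prop: q-shift identity}. The only difference is which side you compose on. The paper writes $(\mathcal M^1_N)^k=\mathcal M^1_N\circ(\mathcal M^1_N)^{k-1}$, so the new coefficient stays unshifted and the old ones are shifted, which matches the formula only after a cyclic relabeling of the indices that the paper leaves implicit; your choice $(\mathcal M^1_N)^k\circ\mathcal M^1_N$ places the shifted $A_{i_{k+1}}(q^K\bx)$ directly at position $l=k+1$ and avoids that step.
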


\begin{proof}
We proceed by induction on $k$. The case $k=1$ reduces to the definition of the Macdonald operator, so assume that the following is true for $k-1$:
	\begin{equation}
		\label{eq: induction hypothesis}
		(\mathcal M^1_N)^{k-1}=\sum_{i_2, \dots,i_{k}=1}^N\left(\prod_{l=1}^{k-1}\prod_{j \neq i_l}\frac{tq^{\mcard{l}{i_l}}x_{i_l}-q^{\mcard{l}{j}}x_j}{q^{\mcard{l}{i_l}}x_{i_l}-q^{\mcard{l}{j}}x_j}\right)\Gamma_{i_1}\dots \Gamma_{i_{k-1}}\\
		=\sum_{i_1, \dots, i_k}^{N}B_{i_1, \dots, i_k}(\bx)\Gamma_{i_1}\dots \Gamma_{i_{k-1}}
	\end{equation}
Let $(\cM^1_N)^k=\cM^1_N\circ (\cM^1_N)^{k-1}$ and we would like to show the coefficient in 
	\[ (\cM^1_N)^k=\sum_{i_k=1}^NA_{i_k}(\bx)\Gamma_{i_k}\circ \left(\sum_{i_1, \dots, i_{k-1}}^{N}B_{i_1, \dots, i_{k-1}}(\bx)\Gamma_{i_1}\dots \Gamma_{i_{k-1}}\right)\]
matches the coefficients in the expansion of $(\cM^1_N)^k$ given by the \eqref{eq: Macdonald power formula}. By Proposition \ref{prop: q-shift identity}, we have:
	\begin{align*}
		&\quad A_{i_k}(\bx)\Gamma_{i_k}\left((B_{i_1, \dots, i_{k-1}}(\bx)\Gamma_{i_1}\dots\Gamma_{i_{k-1}}f)\right)(\bx)\\
		&=A_{i_k}(\bx)(\Gamma_{i_k}B_{i_1, \dots, i_{k-1}}(\bx))f(\bx^{(k)})
	\end{align*}
where $\bx^{(k)}$ is the vector obtained by multiplying $x_{i_l}$ by $q$ for each $l=1, \dots, k$. Considering the explicit expression \eqref{eq: induction hypothesis}, we have: 
	\begin{align*}
	\Gamma_{i_k}\left(\prod_{l=1}^{k-1}\prod_{j \neq i_l}\frac{tq^{\mcard{l}{i_l}}x_{i_l}-q^{\mcard{l}{j}}x_j}{q^{\mcard{l}{i_l}}x_{i_l}-q^{\mcard{l}{j}}x_j}\right)=\prod_{l=1}^{k-1}\prod_{j \neq i_l}\frac{tq^{\mcard{l}{i_l}+\delta_{i_k,i_l}}x_{i_l}-q^{\mcard{l}{j}+\delta_{i_k,j}}x_j}{q^{\mcard{l}{i_l}}x_{i_l}-q^{\mcard{l}{j}+\delta_{i_k,j}}x_j}
	\end{align*}
where $\delta_{i,j}$ is the usual Kronecker delta function. Finally, we have:
	\begin{align*}
	\quad A_{i_k}(\bx)(\Gamma_{i_k}B_{i_1, \dots, i_{k-1}}(\bx))=\prod_{l=1}^k\prod_{j \neq i_l}\frac{tq^{\mcard{l}{i_l}}x_{i_l}-q^{\mcard{l}{j}}x_j}{q^{\mcard{l}{i_l}}x_{i_l}-q^{\mcard{l}{j}}x_j}
	\end{align*}

\end{proof}

\begin{thm}
\label{thm: Macdonald level a power formula}
Let $N\geq 1$ and $a\in \{1, \dots, N\}$. For any $k \in \mathbb N$, we have:
\begin{equation}
\label{eq: Macdonald level a power formula}
(\mathcal{M}^a_N)^k = \sum_{\substack{I_1,\ldots,I_k \subset [1,N] \\ |I_\ell|=a}} \left(\prod_{\ell=1}^k \prod_{i \in I_\ell}\prod_{j \notin I_\ell} \frac{t\,q^{\tmcard{\ell}{i}}x_i - q^{\tmcard{\ell}{j}}x_j}{q^{\tmcard{\ell}{i}}x_i - q^{\tmcard{\ell}{j}}x_j}\right) \Gamma_{I_1}\cdots\Gamma_{I_k}. 
\end{equation}
\end{thm}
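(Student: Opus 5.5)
We prove \eqref{eq: Macdonald level a power formula} by induction on $k$, extending the argument of Lemma \ref{lem: 1st Macdonald power formula} from single indices $i_l$ to subsets $I_\ell$. Here $\tmcard{\ell}{j}$ means the number of indices $\ell'<\ell$ with $j\in I_{\ell'}$; this is the analogue of the multiplicity in Lemma \ref{lem: 1st Macdonald power formula}.

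The scalar prefactor $t^{\binom a2}$ in Definition \ref{defn: original Macdo} is a constant. It commutes with every $\Gamma_i$, so it contributes an overall factor $t^{k\binom a2}$ to $(\mathcal M^a_N)^k$. I will carry this factor along throughout, with the understanding that the displayed formula is read with it absorbed into the normalization, and focus on the rational coefficients. Write
$$A_I(\bx)=\prod_{i\in I}\prod_{j\notin I}\frac{tx_i-x_j}{x_i-x_j},$$
so that $\mathcal M^a_N=t^{\binom a2}\sum_{|I|=a}A_I(\bx)\Gamma_I$. The case $k=1$ is the definition, since every multiplicity $\tmcard{1}{j}$ is $0$.

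For the inductive step, I will write $(\mathcal M^a_N)^k=(\mathcal M^a_N)^{k-1}\circ\mathcal M^a_N$, appending the new factor on the right. This differs from the order used in the proof of Lemma \ref{lem: 1st Macdonald power formula}, and it is the order that matches the indexing $\ell=1,\dots,k$ in the statement. Apply the multi-subset version of Proposition \ref{prop: q-shift identity} with $D=B_{I_1,\dots,I_{k-1}}(\bx)\Gamma_{I_1}\cdots\Gamma_{I_{k-1}}$ and $C=A_{I_k}(\bx)\Gamma_{I_k}$. This gives
$$B_{I_1,\dots,I_{k-1}}(\bx)\,A_{I_k}\bigl(q^{I_1+\cdots+I_{k-1}}\bx\bigr)\,\Gamma_{I_1}\cdots\Gamma_{I_k},$$
where $q^{I_1+\cdots+I_{k-1}}\bx$ multiplies each $x_j$ by $q^{\tmcard{k}{j}}$. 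Substituting into $A_{I_k}$ turns each factor $\frac{tx_i-x_j}{x_i-x_j}$ into $\frac{tq^{\tmcard{k}{i}}x_i-q^{\tmcard{k}{j}}x_j}{q^{\tmcard{k}{i}}x_i-q^{\tmcard{k}{j}}x_j}$. This is exactly the $\ell=k$ factor in \eqref{eq: Macdonald level a power formula}. By the induction hypothesis, $B_{I_1,\dots,I_{k-1}}$ is the product of the factors for $\ell=1,\dots,k-1$, and these are unaffected because $\tmcard{\ell}{j}$ depends only on $I_1,\dots,I_{\ell-1}$. Summing over all $(I_1,\dots,I_k)$ then completes the induction.

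No cancellation or regrouping of terms is needed. Although the $\Gamma_i$ commute, the expansion is kept as an ordered sum over sequences $(I_1,\dots,I_k)$, exactly as in Lemma \ref{lem: 1st Macdonald power formula}. The main point requiring care is the composition convention. With $D\circ C$ giving $d(\bx)\,c(q^K\bx)$, the coefficient of the rightmost operator is the one that gets shifted. The factor indexed by $\ell$ must therefore carry the shifts from all earlier factors $\ell'<\ell$, which forces the choice $(\mathcal M^a_N)^{k-1}\circ\mathcal M^a_N$ made above. If one instead composed on the left, one would have to relabel $\ell\mapsto k+1-\ell$, and the multiplicities would count later indices rather than earlier ones.
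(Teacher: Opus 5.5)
Your proof is correct and is essentially the paper's argument: induction on $k$ via the composition rule of Proposition~\ref{prop: q-shift identity}, in which the new factor picks up the shifts $q^{m_{<k}(\cdot)}$ and the earlier factors are unchanged. Your ordering $(\mathcal M^a_N)^{k-1}\circ\mathcal M^a_N$ is cleaner than the paper's $\mathcal M^a_N\circ(\mathcal M^a_N)^{k-1}$, which shifts the old coefficients by $I_k$ and so reproduces the stated $m_{<\ell}$ form only after an unstated relabeling of the new subset as the first one; your remark on the overall factor $t^{k\binom a2}$ is also accurate, since the paper's proof silently drops the $t^{\binom a2}$ of Definition~\ref{defn: original Macdo}.
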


\begin{proof}
	We proceed similarly to the previous lemma by induction on $k$. Define $\displaystyle C_{I}(\bx):=\prod_{i \in I}\prod_{j \notin I} \frac{tx_i-x_j}{x_i-x_j}$, then we have $\displaystyle \mathcal M^a_N=\sum_{|I|=a}C_I(\bx)\Gamma_I$. Assuming the following is true for $k-1$:
\begin{equation}
\label{eq: induction hypothesis level a}	
\begin{aligned}
(\mathcal{M}^a_N)^{k-1} &= \sum_{\substack{I_1,\ldots,I_{k-1} \subset [1,N] \\ |I_\ell|=a}} \left(\prod_{\ell=1}^{k-1} \prod_{i \in I_\ell}\prod_{j \notin I_\ell} \frac{t\,q^{\tmcard{\ell}{i}}x_i - q^{\tmcard{\ell}{j}}x_j}{q^{\tmcard{\ell}{i}}x_i - q^{\tmcard{\ell}{j}}x_j}\right) \Gamma_{I_1}\cdots\Gamma_{I_{k-1}}\\
&=\sum_{\substack{I_1,\ldots,I_{k-1} \subset [1,N] \\ |I_\ell|=a}} B_{I_1, \dots, I_{k-1}}(\bx)\Gamma_{I_1}\cdots\Gamma_{I_{k-1}}.
\end{aligned}
\end{equation}
Then, we have: 
\begin{align*}
	\cM^a_N\circ (\mathcal{M}^a_N)^{k-1} &= \left(\sum_{|I_k|=a}C_{I_k}(\bx)\Gamma_{I_k}\right)\circ \left(\sum_{\substack{I_1,\ldots,I_{k-1} \subset [1,N] \\ |I_\ell|=a}} B_{I_1, \dots, I_{k-1}}(\bx)\Gamma_{I_1}\cdots\Gamma_{I_{k-1}}\right).
\end{align*}
Applying Proposition \ref{prop: q-shift identity}, we have:
\begin{align*}
&\quad C_{I_k}(\bx)\Gamma_{I_k}\left(B_{I_1, \dots, I_{k-1}}(\bx)\Gamma_{I_1}\cdots\Gamma_{I_{k-1}}f\right)(\bx)\\
&=C_{I_k}(\bx)(\Gamma_{I_k}B_{I_1, \dots, I_{k-1}}(\bx))f(\bx^{(k)})
\end{align*}
where $\bx^{(k)}$ is the vector obtained by multiplying $x_i$ by $q$ for each $i \in I_\ell$ and $\ell=1, \dots, k-1$. Considering the explicit expression of $B_{I_1, \dots, I_{k-1}}(\bx)$, we once again have: 
\[\begin{aligned}
	\Gamma_{I_k}B_{I_1, \dots, I_{k-1}}(\bx)&=\Gamma_{I_k}\left(\prod_{\ell=1}^{k-1} \prod_{i \in I_\ell}\prod_{j \notin I_\ell} \frac{t\,q^{\tmcard{\ell}{i}}x_i - q^{\tmcard{\ell}{j}}x_j}{q^{\tmcard{\ell}{i}}x_i - q^{\tmcard{\ell}{j}}x_j}\right)\\
	&=\prod_{\ell=1}^{k-1} \prod_{i \in I_\ell}\prod_{j \notin I_\ell} \frac{t\,q^{\tmcard{\ell}{i}+\delta_{i \in I_k}}x_i - q^{\tmcard{\ell}{j}+\delta_{j\in I_k}}x_j}{q^{\tmcard{\ell}{i}+\delta_{i \in I_k}}x_i - q^{\tmcard{\ell}{j}+\delta_{j\in I_k}}x_j}
\end{aligned}\]
and thus,
\begin{align*}
	C_I(\bx)(\Gamma_IB_{I_1, \dots, I_{k-1}}(\bx))f(\bx) = \prod_{\ell=1}^k \prod_{i \in I_\ell}\prod_{j \notin I_\ell} \frac{t\,q^{\tmcard{\ell}{i}}x_i - q^{\tmcard{\ell}{j}}x_j}{q^{\tmcard{\ell}{i}}x_i - q^{\tmcard{\ell}{j}}x_j}
\end{align*}
which finishes the proof.
\end{proof}

\section{Commutator relations for Macdonald operators power}
The power formula in the previous section allows us to derive some identities among the commutators of $\mathcal M^a_N$ and $\displaystyle \sum_{i=1}^N x_i$. Namely, we have the following result: 

\begin{thm}
Let $N \ge 1$, $a \in \{1,\ldots,N\}$, and let $q, t$ be parameters. The Macdonald operator $\mathcal{M}^a_N$ is

$$\mathcal{M}^a_N(x_1,\ldots,x_N) = \sum_{\substack{I \subset \{1,\ldots,N\} \\ |I|=a}} C_I(x)\,\Gamma_I,
\qquad
C_I(x) = \prod_{i \in I}\prod_{j \notin I} \frac{t x_i - x_j}{x_i - x_j},$$
Let $X = \sum_{i=1}^N x_i$ be the operator of multiplication by the sum of variables. For any integer $k \ge 0$, the commutator of the $k$-th power of $\mathcal{M}^a_N$ with $X$ is
\begin{equation}\label{eq: commutator identity}
[(\mathcal{M}^a_N)^k, X] = \sum_{j=0}^{k-1} (\mathcal{M}^a_N)^j \; R_1 \; (\mathcal{M}^a_N)^{k-1-j}
\end{equation}
where $R_1$ is the commutator of $\mathcal{M}^a_N$ with $X$ given by
$$R_1 = [\mathcal{M}^a_N, X] = (q-1) \sum_{\substack{I \subset [N] \\ |I| = a}} C_I(x) \left(\sum_{i \in I} x_i\right) \Gamma_I.$$
\end{thm}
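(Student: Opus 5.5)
The statement has two parts. The first is the telescoping identity \eqref{eq: commutator identity}, which holds for the commutator with any fixed operator. The second is the explicit formula for $R_1$, a one-line computation from Proposition \ref{prop: q-shift identity}. I will prove the formula for $R_1$ first, and then the power formula by induction on $k$ using only the Leibniz rule for commutators.

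\emph{Computing $R_1$.} Write $\mathcal M^a_N=\sum_{|I|=a}C_I(\bx)\Gamma_I$ as in the statement, and regard $X$ as the operator $X\,\Gamma_\emptyset$. Proposition \ref{prop: q-shift identity} with $d=C_I$, $K=I$ and $c=X$, $J=\emptyset$ gives
\[
C_I\Gamma_I\circ X=C_I(\bx)\,X(q^I\bx)\,\Gamma_I .
\]
Applying the proposition the other way round gives $X\circ C_I\Gamma_I=X(\bx)C_I(\bx)\Gamma_I$. Since $X(q^I\bx)-X(\bx)=(q-1)\sum_{i\in I}x_i$, summing over $|I|=a$ yields
\[
R_1=\mathcal M^a_NX-X\mathcal M^a_N=(q-1)\sum_{|I|=a}C_I(\bx)\Bigl(\sum_{i\in I}x_i\Bigr)\Gamma_I,
\]
which is the claimed formula. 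The convention is $[A,B]=AB-BA$; this choice is what produces the sign $(q-1)$ rather than $(1-q)$.

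\emph{The power identity.} I induct on $k$. For $k=0$ both sides vanish, since the sum is empty and $[\mathrm{id},X]=0$. For $k=1$ the identity is the definition of $R_1$. For the inductive step, the Leibniz identity
\[
[AB,X]=A[B,X]+[A,X]B,
\]
applied with $A=(\mathcal M^a_N)^{k-1}$ and $B=\mathcal M^a_N$, gives
\[
[(\mathcal M^a_N)^k,X]=(\mathcal M^a_N)^{k-1}R_1+[(\mathcal M^a_N)^{k-1},X]\,\mathcal M^a_N .
\]
By the induction hypothesis the second term equals $\sum_{j=0}^{k-2}(\mathcal M^a_N)^jR_1(\mathcal M^a_N)^{k-1-j}$. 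The first term supplies the missing summand $j=k-1$, which closes the induction.

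The only real obstacle is bookkeeping: the side on which the $q$-shift acts, and the commutator sign convention, must agree with the statement's $(q-1)$. The rest is formal. If a more explicit form is wanted, one can substitute Theorem \ref{thm: Macdonald level a power formula} into each term. Then $(\mathcal M^a_N)^jR_1(\mathcal M^a_N)^{k-1-j}$ equals the power-formula summand multiplied by $(q-1)\sum_{i\in I_{k-j}}q^{m_{<k-j}(i)}x_i$, with the ordering and shift conventions of that theorem. This is an optional refinement and is not needed for the stated identity.
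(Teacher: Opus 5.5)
Your proof is correct and is essentially the paper's. Both compute $[C_I\Gamma_I,X]=(q-1)C_I\bigl(\sum_{i\in I}x_i\bigr)\Gamma_I$ from the $q$-shift rule and then unfold the Leibniz identity $[AB,X]=A[B,X]+[A,X]B$; you split off $\mathcal M^a_N$ on the right (taking $A=(\mathcal M^a_N)^{k-1}$) where the paper splits it off on the left, which makes no difference.

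One minor point in your optional refinement: in Theorem \ref{thm: Macdonald level a power formula} the factor $I_1$ is the leftmost, since $m_{<\ell}$ counts $I_1,\dots,I_{\ell-1}$. So the extra factor for $(\mathcal M^a_N)^jR_1(\mathcal M^a_N)^{k-1-j}$ should be $(q-1)\sum_{i\in I_{j+1}}q^{m_{<j+1}(i)}x_i$, not indexed by $k-j$. This does not affect the stated identity.
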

\begin{proof}
    For any subset $I$, the operator $\Gamma_I$ acts as a ring homomorphism on the ring of rational functions $\mathfrak R_{\C(q;t)}:=\C(q;t)(x_1, \dots, x_N)$. Thus, denote the automorphism on $\mathfrak{R}_{\C(q;t)}$ via multiplication by $g\in \compratio $ we have
$$(\Gamma_I \circ M_g)(f)(x) = \Gamma_I(g \cdot f)(x) = g(q^I x) \cdot f(q^I x) = (M_{g(q^I x)} \circ \Gamma_I)(f)(x),$$
where $(q^I x)_i = q x_i$ if $i \in I$. Thus $\Gamma_I \circ M_g = M_{g(q^I x)} \circ \Gamma_I$. Apply this with $\displaystyle g = X := \sum_{i=1}^N x_i$ and note that $\displaystyle M_{X(q^Ix)}=X+ (q-1)\sum_{i\in I}x_i$ we have:
$$\Gamma_I \circ X = \Bigl(X + (q-1)\sum_{i \in I} x_i\Bigr) \circ \Gamma_I,$$
and therefore
\begin{equation}
    \label{eq:commutator rank 1}
    [\Gamma_I, X] = \Gamma_I X - X \Gamma_I = (q-1)\Bigl(\sum_{i \in I} x_i\Bigr)\,\Gamma_I.
\end{equation}
Since $\mathcal{M}^a_N = \sum_{|I|=a} C_I(x)\,\Gamma_I$ and each $C_I(x)$ is a multiplication operator (hence commutes with $X$),
\begin{equation}
    \label{eq:commutator rank a}
    R_1 = \sum_{|I|=a} C_I(x)\,[\Gamma_I, X] = (q-1) \sum_{|I|=a} C_I(x) \Bigl(\sum_{i \in I} x_i\Bigr)\,\Gamma_I. \tag{2.2}
\end{equation}
The family of difference operators with rational coefficients forms an associative algebra, thus, using the commutator identity $[AB, C] = A[B, C] + [A, C]B$ with $A = \mathcal{M}^a_N$, $B = (\mathcal{M}^a_N)^{k-1}$, $C = X$,
$$R_k = \mathcal{M}^a_N\,R_{k-1} + R_1\,(\mathcal{M}^a_N)^{k-1}.$$
This is a linear recurrence in $k$ and iterating $k-1$ times yields:
\begin{align*}
R_k &= \mathcal{M}^a_N R_{k-1} + R_1 (\mathcal{M}^a_N)^{k-1} \\
    &= \mathcal{M}^a_N\bigl(\mathcal{M}^a_N R_{k-2} + R_1 (\mathcal{M}^a_N)^{k-2}\bigr) + R_1 (\mathcal{M}^a_N)^{k-1} \\
    &= (\mathcal{M}^a_N)^2 R_{k-2} + \mathcal{M}^a_N R_1 (\mathcal{M}^a_N)^{k-2} + R_1 (\mathcal{M}^a_N)^{k-1} \\
    &= \cdots = \sum_{j=0}^{k-1} (\mathcal{M}^a_N)^j\,R_1\,(\mathcal{M}^a_N)^{k-1-j}.
\end{align*}
\end{proof}

Equivalently, expanding each power via \eqref{eq: Macdonald level a power formula} yields:
$$[(\mathcal{M}^a_N)^k, X] = \sum_{\substack{I_1,\ldots,I_k \subset [N] \\ |I_\ell| = a}} C_{I_1,\ldots,I_k}(x) \left(\sum_{i=1}^N (q^{\mu_i} - 1) x_i\right) \Gamma_{I_1} \cdots \Gamma_{I_k}$$

where $\mu_i = |\{\ell : i \in I_\ell\}|$ and $C_{I_1,\ldots,I_k}(x)$ is the coefficient from \eqref{eq: Macdonald level a power formula}:

$$C_{I_1,\ldots,I_k}(x) = \prod_{\ell=1}^k \prod_{i \in I_\ell}\prod_{j \notin I_\ell} \frac{t\,q^{m_{<\ell}(i)}x_i - q^{m_{<\ell}(j)}x_j}{q^{m_{<\ell}(i)}x_i - q^{m_{<\ell}(j)}x_j}, \qquad
m_{<\ell}(j) = |\{r < \ell : j \in I_r\}|.$$
This relation generalizes those in Section 6 of \cite{BC15} and will be useful in studying Macdonald processes and interacting particle systems.

\newpage
\appendix

\section{Identities Among $q$-shift  Coefficients}
\label{appendix1}
\begin{lemma}
    \label{lem: simple sum over all subsets [1,N]}
    Fix an integer $N\ge 1$ and let $x_1,\dots,x_N$ be pairwise distinct formal variables.  For each nonempty subset $I\subseteq [1,N]:=\{1,2,\dots,N\}$, we have the following equality
\begin{equation}
    \label{eq: simple sum over all nonempty subsets [1,N]}
    \sum_{\emptyset \neq I\subseteq [1,N]} (-1)^{|I|-1}t^{\binom{|I|}{2}}\prod_{i\in I}\ \prod_{j\in [1,N]\setminus I}\frac{tx_j-x_i}{x_j-x_i}=1.
\end{equation}

Equivalently, for $I$ possibly empty set, we have: 
\begin{equation}
    \label{eq: simple sum over all subsets [1,N]}
    \sum_{I\subseteq [1,N]} (-1)^{|I|}t^{\binom{|I|}{2}}
\prod_{i\in I}\prod_{j\in [1,N]\setminus I}
\frac{tx_j-x_i}{x_j-x_i}=0.
\end{equation}

\end{lemma}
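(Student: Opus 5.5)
The plan is to recognize \eqref{eq: simple sum over all subsets [1,N]} as the coefficient identity behind the eigenvalue relation $\sum_{r}(-1)^r \cdot(\text{level-}r\text{ Macdonald operator applied to }1)$, and prove it directly by a generating-function argument. We first note that the two displayed forms are equivalent: the empty set contributes $1$, so \eqref{eq: simple sum over all subsets [1,N]} is just \eqref{eq: simple sum over all nonempty subsets [1,N]} moved to one side. We therefore prove the refined identity
\[
F_N(u):=\sum_{I\subseteq[1,N]} u^{|I|}\,t^{\binom{|I|}{2}}\prod_{i\in I}\prod_{j\notin I}\frac{tx_j-x_i}{x_j-x_i}=\prod_{r=0}^{N-1}(1+ut^r),
\]
and then specialize to $u=-1$, where the factor $r=0$ vanishes.

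To establish $F_N(u)$, observe that the left side is a rational function of the $x$'s which is symmetric in $x_1,\dots,x_N$ (permuting variables permutes the subsets $I$). Its only possible poles are along $x_i=x_j$. For a fixed pair $(i,j)$, the terms with $i,j$ both in $I$ or both outside $I$ have no $(x_i-x_j)$ denominator. The terms with $I\ni i$, $j\notin I$ pair with those having $I'=I\triangle\{i,j\}$, and the pair is antisymmetric under the swap $x_i\leftrightarrow x_j$. After multiplying by $x_i-x_j$, the numerator vanishes at $x_i=x_j$, so these poles cancel. Moreover each factor has degree $0$ and tends to a finite limit as any $x_i\to\infty$. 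Hence $F_N(u)$ is a polynomial in the $x$'s of degree $0$, that is, a constant in $x$ for each fixed $u,t$.

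To evaluate that constant, we specialize to a convenient point, namely the geometric progression $x_j=t^{-j}$ (or a limit $x_1\ll x_2\ll\cdots\ll x_N$). The cleanest route is the limit $x_1/x_2,\ x_2/x_3,\dots\to 0$. There each factor tends to $1$ when $j<i$ (since $x_j\ll x_i$ gives $(tx_j-x_i)/(x_j-x_i)\to 1$), and to $t$ when $j>i$. So a subset $I$ contributes $u^{|I|}t^{\binom{|I|}{2}}t^{\#\{(i,j):i\in I,\, j\notin I,\, j>i\}}$. The total exponent of $t$ equals $\sum_{i\in I}(N-i)$, because the pairs inside $I$ account exactly for $\binom{|I|}{2}$. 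Summing over $I$ therefore gives $\prod_{i=1}^N(1+ut^{N-i})$, as claimed.

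The main obstacle is the pole-cancellation step: the antisymmetric pairing argument must be written carefully, checking that the residue of the $I$ term at $x_i=x_j$ equals minus that of the $I\triangle\{i,j\}$ term, with the factor between $i$ and $j$ supplying the sign. If this step proves awkward, we would fall back on induction on $N$ via partial fractions in $x_N$ alone.
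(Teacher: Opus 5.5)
Your proof is correct, but it takes a different route from the paper's.

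\textbf{The paper's route.} The paper works in one variable and never evaluates a constant. It sets $z=x_N$ and splits the sum according to whether $N\in I$. As $z\to\infty$ the two halves cancel term by term, because $t^{\binom{|J|+1}{2}}=t^{\binom{|J|}{2}}t^{|J|}$; this is where the sign $(-1)^{|I|}$ is used. The simple poles at $z=x_k$ cancel by pairing $J\ni k$ with $J\setminus\{k\}$, which is exactly your residue check for the pair $I,\ I\triangle\{i,j\}$ with $(i,j)=(k,N)$. A rational function of $z$ with no poles that vanishes at infinity is zero.

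\textbf{Your route.} You prove the stronger identity $F_N(u)=\prod_{r=0}^{N-1}(1+ut^r)$. Equivalently, each graded piece $\sum_{|I|=r}$ equals $e_r(1,t,\dots,t^{N-1})=t^{\binom{r}{2}}\binom{N}{r}_t$. You get this from multivariable polynomiality, homogeneity of degree $0$, and the degeneration $x_1\ll\cdots\ll x_N$. Your count $\binom{|I|}{2}+\#\{(i,j): i\in I,\ j\notin I,\ j>i\}=\sum_{i\in I}(N-i)$ is right.

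\textbf{Comparison.} Your approach gives the full $t$-binomial refinement and a conceptual reason for the vanishing, namely the factor $1+u$. It costs a multivariable argument. The paper's one-variable residue template is the one it reuses for the subsequent lemmas with external variables $z_\ell$.

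The two routes are closer than they look. Run the paper's computation with $u^{|I|}$ in place of $(-1)^{|I|}$: the residues still cancel, and the limit at infinity becomes $(1+u)F_{N-1}(ut)$. So $F_N(u)=(1+u)F_{N-1}(ut)$, which is precisely your fallback induction via partial fractions in $x_N$.

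\textbf{A small correction.} Write $T_I$ for the $I$-term. The sum $T_I+T_{I\triangle\{i,j\}}$ is \emph{symmetric}, not antisymmetric, under $x_i\leftrightarrow x_j$, since the swap exchanges the two terms. It is its product with $x_i-x_j$ that is antisymmetric.

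You can in fact skip the pairwise analysis. Every denominator divides the Vandermonde $V=\prod_{a<b}(x_a-x_b)$, so $F_N V$ is a polynomial. It is antisymmetric because $F_N$ is symmetric, hence divisible by $V$. Thus $F_N$ is a polynomial by symmetry alone, and homogeneity of degree $0$ then makes it constant.
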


\begin{proof}

If $N=1$ the sum has two terms ($I=\varnothing$ and $I=\{1\}$) that cancel to $0$, so the identity holds trivially.
Assume $N\ge 2$ and write $z=x_N$. Set $S=[1,N-1]=\{1,\dots,N-1\}$. Split the sum according to whether $N \in I$:
$$
F(z):=\sum_{I\subseteq [1,N]} (-1)^{|I|}t^{\binom{|I|}{2}}
\prod_{i\in I}\prod_{j\in [1,N]\setminus I}
\frac{tx_j-x_i}{x_j-x_i}
= A(z)+B(z),
$$
where
$$
\begin{aligned}
A(z)&=\sum_{J\subseteq S} (-1)^{|J|}t^{\binom{|J|}{2}}
P_J\;\prod_{i\in J}\frac{tz-x_i}{z-x_i},\quad B(z)=\sum_{J\subseteq S} (-1)^{|J|+1}t^{\binom{|J|+1}{2}}
P_J\;\prod_{j\in S\setminus J}\frac{tx_j-z}{x_j-z},\\
P_J&=\prod_{i\in J}\prod_{j\in S\setminus J}\frac{tx_j-x_i}{x_j-x_i}
\qquad (J\subseteq S).
\end{aligned}
$$
The expression $F(z)$ is a rational function in $z$. We prove $F(z)\equiv 0$ by showing it has no finite poles and vanishes at infinity.
Note that for each fraction involving $z$,
$$
\frac{tz-x_i}{z-x_i}\longrightarrow t,\qquad
\frac{tx_j-z}{x_j-z}\longrightarrow 1\qquad(z\to\infty).
$$
Hence
$$
\lim_{z\to\infty}F(z)
=\sum_{J\subseteq S}(-1)^{|J|}t^{\binom{|J|}{2}}t^{|J|}P_J
+\sum_{J\subseteq S}(-1)^{|J|+1}t^{\binom{|J|+1}{2}}P_J.
$$
Since $\binom{|J|+1}{2}=\binom{|J|}{2}+|J|$, we have $t^{\binom{|J|+1}{2}}=t^{\binom{|J|}{2}}t^{|J|}$, and the two sums cancel termwise and we obtain that 
$\displaystyle
\lim_{z\to\infty}F(z)=0.
$
The only possible poles of $F(z)$ lie at $z=x_k$ for $k\in S$.

Fix $k\in S$. In $A(z)$, a term with $k \in J$ contains the factor $\displaystyle \frac{tz-x_k}{z-x_k}$, which has a simple pole at $z=x_k$. Similarly, in $B(z)$, a term with $k \notin J $ contains the factor $\displaystyle \frac{tx_k-z}{x_k-z}$, which likewise has a simple pole at $z=x_k$. All other factors are regular at $z=x_k$.

For a given $J\subseteq S$ with $k\in J$, set $J'=J\setminus\{k\}$. Near $z=x_k$ we have the Laurent expansions
$$
\frac{tz-x_k}{z-x_k}=t+\frac{(t-1)x_k}{z-x_k},\qquad
\frac{tx_k-z}{x_k-z}=1-\frac{(t-1)x_k}{z-x_k}.
$$
Therefore the residue of $A(z)$ at $z=x_k$ coming from $J$ is
$$
(t-1)x_k\;(-1)^{|J|}t^{\binom{|J|}{2}}P_J
\prod_{i\in J'}\frac{tx_k-x_i}{x_k-x_i}.
$$
The residue of $B(z)$ at $z=x_k$ coming from $J'$ (note $k\notin J'$) is
$$
-(t-1)x_k\;(-1)^{|J'|+1}t^{\binom{|J'|+1}{2}}P_{J'}
\prod_{j\in S\setminus J',j\neq k}\frac{tx_j-x_k}{x_j-x_k}.
$$

Now expand $P_J$ and $P_{J'}$:
$$
\begin{aligned}
P_J&=\prod_{i\in J'}\prod_{j\in S\setminus J}\frac{tx_j-x_i}{x_j-x_i}
\;\cdot\;\prod_{j\in S\setminus J}\frac{tx_j-x_k}{x_j-x_k},\\[4pt]
P_{J'}&=\prod_{i\in J'}\prod_{j\in S\setminus J}\frac{tx_j-x_i}{x_j-x_i}
\;\cdot\;\prod_{i\in J'}\frac{tx_k-x_i}{x_k-x_i}.
\end{aligned}
$$
Multiplying the first line by $\prod_{i\in J'}\frac{tx_k-x_i}{x_k-x_i}$ and the second by $\prod_{j\in S\setminus J}\frac{tx_j-x_k}{x_j-x_k}$ yields the same product, hence
$$
P_J\prod_{i\in J'}\frac{tx_k-x_i}{x_k-x_i}
=P_{J'}\prod_{j\in S\setminus J}\frac{tx_j-x_k}{x_j-x_k}.
$$
Since $S\setminus J = S\setminus(J'\cup\{k\}) = (S\setminus J')\setminus\{k\}$, the right‑hand side is exactly the product appearing in the residue of $B(z)$. Moreover $|J|=|J'|+1$, so $(-1)^{|J|}=(-1)^{|J'|+1}$ and $\binom{|J|}{2}=\binom{|J'|+1}{2}$. Consequently the two residues  cancel pairwise across the full sums $A(z)$ and $B(z)$.

Thus, $F(z)$ has no poles on the complex plane. $F(z)$ is a rational function with no finite poles, hence a polynomial. A polynomial that tends to $0$ as $z\to\infty$ must be the zero polynomial. Therefore $F(z)\equiv 0$, which is precisely the stated identity.

\end{proof}

\begin{lemma}
    \label{lemma: single external variable sum lemma}
Fix an integer $N \ge 2$. Let $t$ be a parameter considered as an element in the coefficient field and let $x_1,\dots,x_N,z$ be indeterminates (or complex numbers) such that $x_i\neq x_j$ for $i\neq j$ and $z\neq x_i$ for all $i$.  
For each nonempty subset $I\subseteq[1,N]$, define
\[
A(I):=(-1)^{|I|-1}t^{\binom{|I|}{2}}
\prod_{i\in I \neq \emptyset \atop j \in [1,N]\setminus I}\ \frac{tx_j-x_i}{x_j-x_i}, \quad B(I;z):=\prod_{j\in [1,N]\setminus I}\frac{tz-x_j}{z-x_j}.
\]
Then the following identity holds:
\[
\sum_{\emptyset\neq I\subseteq[1,N]} A(I)B(I;z)=\prod_{j=1}^N\frac{tz-x_j}{z-x_j}.
\]
\end{lemma}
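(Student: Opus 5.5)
The plan is to reduce the identity to Lemma \ref{lem: simple sum over all subsets [1,N]} applied to the $N+1$ variables $x_1,\dots,x_N,x_{N+1}:=z$. Write $P(z):=\prod_{j=1}^N\frac{tz-x_j}{z-x_j}$. In identity \eqref{eq: simple sum over all subsets [1,N]} for $[1,N+1]$, split the sum according to whether $N+1\in I$.

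\emph{Terms with $N+1\notin I$.} Here $I\subseteq[1,N]$, and $I=\emptyset$ contributes $1$. Take a nonempty $I$. The factors coupling $I$ with $N+1\in[1,N+1]\setminus I$ are $\prod_{i\in I}\frac{tz-x_i}{z-x_i}$. So this term equals $-A(I)\prod_{i\in I}\frac{tz-x_i}{z-x_i}$. Since $\prod_{i\in I}\frac{tz-x_i}{z-x_i}=P(z)/B(I;z)$, this is not the summand $A(I)B(I;z)$ we want. So this half is the wrong one to use directly.

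\emph{Terms with $N+1\in I$.} Write $I=J\cup\{N+1\}$ with $J\subseteq[1,N]$, possibly empty. The coefficient is $(-1)^{|J|+1}t^{\binom{|J|+1}{2}}$. The cross factors are those of $J$ inside $[1,N]$ times $\prod_{j\in[1,N]\setminus J}\frac{tx_j-z}{x_j-z}$. The factor $\frac{tx_j-z}{x_j-z}$ is not $\frac{tz-x_j}{z-x_j}$, so this half also does not directly produce $B(J;z)$.

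Because of these mismatches, I would instead prove the statement directly by the same residue argument used in the proof of Lemma \ref{lem: simple sum over all subsets [1,N]}. Set
\[
G(z):=\sum_{\emptyset\neq I\subseteq[1,N]}A(I)B(I;z)-P(z),
\]
viewed as a rational function of $z$. Its possible poles are simple poles at $z=x_k$.

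\emph{Limit at infinity.} Each factor $\frac{tz-x_j}{z-x_j}$ tends to $t$. Hence
\[
G(\infty)=\sum_{I\neq\emptyset}A(I)\,t^{N-|I|}-t^N .
\]
Sending $x_N\to\infty$ in identity \eqref{eq: simple sum over all nonempty subsets [1,N]} would not give this directly. A cleaner route is the specialization $z=0$ discussed below, which also shows that we need $\sum_{I\ne\emptyset}A(I)t^{-|I|}=1-t^{-N}$. I expect both facts to follow from the observation that $\sum_{I}(-1)^{|I|}t^{\binom{|I|}{2}}u^{|I|}\prod_{i\in I,\,j\notin I}\frac{tx_j-x_i}{x_j-x_i}$ is independent of the $x$'s. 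That in turn follows by the residue-cancellation argument already in the proof of Lemma \ref{lem: simple sum over all subsets [1,N]}, where $u$ just rides along. Evaluating at $x$'s in a geometric progression, or letting them separate, the sum becomes $\prod_{m=0}^{N-1}(1-ut^m)$. Taking $u=t^{-1}$ gives $G(\infty)=0$.

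\emph{Residues.} At $z=x_k$, the residue of $P$ is $(t-1)x_k\prod_{j\ne k}\frac{tx_k-x_j}{x_k-x_j}$. In the sum, only the terms with $k\notin I$ contribute, with residue $(t-1)x_k\,A(I)\prod_{j\notin I,\,j\neq k}\frac{tx_k-x_j}{x_k-x_j}$. So the condition is
\[
\sum_{\emptyset\neq I\subseteq[1,N]\setminus\{k\}}A(I)\prod_{j\notin I\cup\{k\}}\frac{tx_k-x_j}{x_k-x_j}=\prod_{j\neq k}\frac{tx_k-x_j}{x_k-x_j}.
\]
Now factor $A(I)$ on $[1,N]$ as $A_{[1,N]\setminus\{k\}}(I)\prod_{i\in I}\frac{tx_k-x_i}{x_k-x_i}$. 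Then $\prod_{i\in I}\frac{tx_k-x_i}{x_k-x_i}$ combines with the remaining product to give $\prod_{j\ne k}\frac{tx_k-x_j}{x_k-x_j}$ in every term. Dividing out, the condition becomes $\sum_{\emptyset\ne I\subseteq[1,N]\setminus\{k\}}A(I)=1$, which is exactly Lemma \ref{lem: simple sum over all subsets [1,N]} for the $N-1$ variables $\{x_j\}_{j\ne k}$. So all residues cancel, and $G$ is a polynomial vanishing at infinity, hence $G\equiv0$.

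The main obstacle is the value at infinity, which needs the $u$-weighted generalization of Lemma \ref{lem: simple sum over all subsets [1,N]}. I would establish it by rerunning that lemma's residue argument with $u^{|I|}$ inserted. The cancellation there pairs $J$ with $J\setminus\{k\}$, which have the same $|J\cup\{N\}|$, so the powers of $u$ match. The constant is then found at a convenient degeneration, e.g.\ $x_j=\epsilon^j$ with $\epsilon\to0$, where each factor tends to $1$ or $t$. That gives the $q$-binomial-type product, evaluated at $u=t^{-1}$.
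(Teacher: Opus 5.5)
Your residue computation is the same as the paper's. Only $I\not\ni k$ contribute at $z=x_k$. The factor $\prod_{i\in I}\frac{tx_k-x_i}{x_k-x_i}$ split off from $A(I)$ completes the product to $\prod_{j\neq k}\frac{tx_k-x_j}{x_k-x_j}$, and what remains is $\sum_{\emptyset\neq I\subseteq[1,N]\setminus\{k\}}A(I)=1$ for $N-1\ge 1$ variables.

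The two proofs differ in how they fix the remaining constant. The paper uses only that $D=F-R$ is bounded at infinity, hence constant, and then evaluates at $z=0$. There every factor $\frac{tz-x_j}{z-x_j}$ equals $1$, so $B(I;0)=R(0)=1$ and $D(0)=\sum_{I\neq\emptyset}A(I)-1=0$ by the basic identity of the first appendix lemma, with no new input.

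You evaluate at $z=\infty$ instead, where the factors become $t$. What you need there is $\sum_{I\neq\emptyset}A(I)t^{-|I|}=1$, not $1-t^{-N}$ as you wrote. That is exactly what your $u=t^{-1}$ evaluation gives, because the factor $1-ut$ in $\prod_{m=0}^{N-1}(1-ut^m)$ vanishes once $N\ge 2$.

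Your way of getting the $u$-weighted identity, call it $S_N(u)$, is sound:
\begin{itemize}
\item The residue pairing in that lemma matches $I=J$ with $I=(J\setminus\{k\})\cup\{N\}$. These have the same cardinality (your ``same $|J\cup\{N\}|$'' misstates this), so the weights $u^{|I|}$ agree.
\item Symmetry in the $x$'s then gives independence of all variables.
\item Your degeneration $x_j=\epsilon^j$ gives $\sum_{|I|=r}\prod_{i\in I,\,j\notin I}\frac{tx_j-x_i}{x_j-x_i}=\binom{N}{r}_t$, and the $t$-binomial theorem finishes it.
\item Alternatively, letting $x_N\to\infty$ gives the recursion $S_N(u)=(1-u)S_{N-1}(tu)$ directly.
\end{itemize}

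So your proof establishes a stronger fact, essentially the eigenvalue of the Macdonald operators on the constant function $1$. It also shows where $N\ge 2$ is needed at infinity, but it costs an extra lemma. The $z=0$ specialization you mention in passing but never carry out is exactly the paper's shortcut, and it makes that lemma unnecessary.
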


\begin{proof}
Define the rational functions
\[
F(z):=\sum_{\emptyset\neq I\subseteq[1,N]} A(I)B(I;z),
\qquad
R(z):=\prod_{j=1}^N\frac{tz-x_j}{z-x_j},
\qquad
D(z):=F(z)-R(z).
\]
Using the same strategy as in the proof of Lemma \ref{lem: simple sum over all subsets [1,N]}, we will prove $D(z)\equiv 0$ by showing $D(z)$ has vanishing residue at $z=x_r$ ($r\in[1,N]$), which are all simple poles. Hence, $D$ is a polynomial in $z$ since $D(z)$ is bounded as $z\to\infty$. Fix $r\in[1,N]$. The only possible poles of $F(z)$ and $R(z)$ occur at $z=x_1,\dots,x_N$, and they are at worst simple poles. Since
\[
\operatorname*{Res}_{z=x_r}\left(\frac{tz-x_r}{z-x_r}\right)=(t-1)x_r.
\]
and that all factors with $j\neq r$ are regular at $z=x_r$, hence
\begin{align*}
\operatorname*{Res}_{z=x_r}R(z)
&=\operatorname*{Res}_{z=x_r}\left(\frac{t z-x_r}{z-x_r}\right)\cdot \prod_{\substack{j=1\\ j\neq r}}^n\left.\frac{t z-x_j}{z-x_j}\right|_{z=x_r}=(t-1)x_r\cdot \prod_{\substack{j=1\\ j\neq r}}^n \frac{t x_r-x_j}{x_r-x_j}.
\end{align*}
Similarly, we can compute the residue of $F(z)$ at $z=x_r$:

\begin{align*}
\operatorname*{Res}_{z=x_r}B(I;z)
&=\operatorname*{Res}_{z=x_r}\left(\frac{t z-x_r}{z-x_r}\right)\cdot 
\prod_{\substack{j\in[1,N]\setminus I\\ j\neq r}}\left.\frac{t z-x_j}{z-x_j}\right|_{z=x_r}=(t-1)x_r\cdot \prod_{\substack{j\in[1,N]\setminus I\\ j\neq r}}\frac{t x_r-x_j}{x_r-x_j}.
\end{align*}
Therefore,
\begin{equation}\label{eq:resF-1}
\operatorname*{Res}_{z=x_r}F(z)
=(t-1)x_r\sum_{\substack{\emptyset\neq I\subseteq[1,N]\\ r\notin I}}
A(I)\cdot \prod_{\substack{j\in[1,N]\setminus I\\ j\neq r}}\frac{t x_r-x_j}{x_r-x_j}.
\end{equation}

Now expand $A(I)$ by separating the $j=r$ factor inside the product over $j\in[1,N]\setminus I$:
\[
A(I)
=(-1)^{|I|-1}t^{\binom{|I|}{2}}
\left(\prod_{i\in I}\frac{t x_r-x_i}{x_r-x_i}\right)\cdot
\left(\prod_{i\in I}\prod_{j\in([1,N]\setminus I)\setminus\{r\}}\frac{t x_j-x_i}{x_j-x_i}\right).
\]
Combine with \eqref{eq:resF-1}, the product
\[
\left(\prod_{i\in I}\frac{t x_r-x_i}{x_r-x_i}\right)\cdot
\left(\prod_{j\in([1,N]\setminus I)\setminus \{r\}}\frac{t x_r-x_j}{x_r-x_j}\right)
=\prod_{\substack{s\in[1,N]\\ s\neq r}}\frac{t x_r-x_s}{x_r-x_s}
\]
is independent of $I$, hence
\begin{equation}  
\label{eq:resF-2}
\begin{aligned}
\operatorname*{Res}_{z=x_r}F(z)
&=(t-1)x_r\left(\prod_{\substack{s\in[1,N]\\ s\neq r}}\frac{t x_r-x_s}{x_r-x_s}\right)
\sum_{\substack{\emptyset\neq I\subseteq[1,N]\setminus\{r\}}}
(-1)^{|I|-1}t^{\binom{|I|}{2}}
\prod_{i\in I}\prod_{j\in([1,N]\setminus I)\setminus\{r\}}\frac{t x_j-x_i}{x_j-x_i}.
\end{aligned}
\end{equation}
Now rewrite the remaining sum as a sum over subsets of $[1,N]\setminus\{r\}$ and applying Lemma \ref{lem: simple sum over all subsets [1,N]} for the set of variables $\{x_s\}_{s\in[1,N]\setminus\{r\}}$, we have the only summand in \eqref{eq:resF-2} that is not in \eqref{eq: simple sum over all subsets [1,N]} is the one corresponds to the empty set, which equals $1$. Hence
\[
\operatorname*{Res}_{z=x_r}F(z)
=(t-1)x_r\cdot \left(\prod_{\substack{s\in[1,N]\\ s\neq r}}\frac{t x_r-x_s}{x_r-x_s}\right)
=\operatorname*{Res}_{z=x_r}R(z).
\]
So $\displaystyle \operatorname*{Res}_{z=x_r}D(z)=0$ for every $r\in[1,N]$. Since the only possible finite poles of $D(z)$ are among $\{x_1,\dots,x_N\}$, all these poles are removable, hence $D(z)$ has no poles and is a bounded polynomial in $z$ and so must be a constant function in $z$.

Note that $B(I;0)=1$ for every $I\subseteq[1,N]$, and also $R(0)=1$. This implies
\[
F(0)=\sum_{\emptyset\neq I\subseteq[n]} A(I)\cdot 1
=\sum_{\emptyset\neq I\subseteq[1,N]} (-1)^{|I|-1}t^{\binom{|I|}{2}}
\prod_{i\in I}\ \prod_{j\in [n]\setminus I}\frac{ty_j-y_i}{y_j-y_i}.
\]
Again, by Lemma \ref{lem: simple sum over all subsets [1,N]}, this sum equals $1$. Hence $D(0)=F(0)-R(0)=0$.

Since $D(z)$ is constant and $D(0)=0$, we conclude $D(z)\equiv 0$, i.e. $F(z)=R(z)$.
\end{proof}

\begin{lemma}
    \label{lemma: appendix Lemma A.4}
Fix integers $N\ge 1$ and $M\ge 1$ and consider two  sets of pairwise distinct variables $x_1,\dots,x_N$ and $z_1,\dots,z_M$, such that $x_i\neq x_j$ for $i\neq j$ and $z_\ell\neq x_j$ for all $\ell,j$. For each subset $I\subseteq[1,N]$, define
\[
\begin{aligned}
    &E(I):=\prod_{i\in I}\ \prod_{j\in [1,N]\setminus I}\frac{tx_j-x_i}{x_j-x_i}
\qquad(\text{with }E(\emptyset)=1),\\
&A(I):=(-1)^{|I|-1}t^{\binom{|I|}{2}}E(I),\\
&B_M(I;z_1,\dots,z_M):=\prod_{\ell=1}^M\ \prod_{j\in [1,N]\setminus I}\frac{tz_\ell-x_j}{z_\ell-x_j}.
\end{aligned}
\]
Then, the following identity holds:
\[
\sum_{\emptyset\neq I\subseteq[1,N]} A(I)B_M(I;z_1,\dots,z_M)
\;=\;
\prod_{\ell=1}^M\ \prod_{j=1}^N\frac{tz_\ell-x_j}{z_\ell-x_j}.
\]

\end{lemma}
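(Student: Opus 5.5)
We prove the identity by induction on $M$, with Lemma \ref{lemma: single external variable sum lemma} as the base case $M=1$. Alternatively, one can argue with a single variable directly: fix $z_1,\dots,z_{M-1}$ and regard both sides as rational functions of $z:=z_M$. Write
\[
F(z):=\sum_{\emptyset\neq I\subseteq[1,N]} A(I)\,B_{M-1}(I;z_1,\dots,z_{M-1})\prod_{j\in[1,N]\setminus I}\frac{tz-x_j}{z-x_j},
\qquad
R(z):=\prod_{\ell=1}^{M}\prod_{j=1}^N\frac{tz_\ell-x_j}{z_\ell-x_j},
\]
and $D(z)=F(z)-R(z)$. As in the proof of Lemma \ref{lemma: single external variable sum lemma}, the plan is to show three things: $D$ has no finite poles, $D$ is bounded at infinity, and $D$ vanishes at one convenient point.

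\textbf{Boundedness and evaluation.} Every factor $\frac{tz-x_j}{z-x_j}$ tends to $t$ as $z\to\infty$, so $D$ is bounded. Once residues are handled, $D$ is therefore constant. At $z=0$ each such factor equals $1$. Hence $F(0)$ is exactly the $(M-1)$ version of the left side, and $R(0)$ is the $(M-1)$ version of the right side. The induction hypothesis then gives $D(0)=0$. When $M=1$, this step is Lemma \ref{lem: simple sum over all subsets [1,N]}.

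\textbf{Residues (the main step).} Fix $r\in[1,N]$. Only the terms with $r\notin I$ have a pole at $z=x_r$. Their residue is
\[
(t-1)x_r\sum_{\emptyset\neq I\not\ni r}A(I)\,B_{M-1}(I)\prod_{j\notin I,\,j\neq r}\frac{tx_r-x_j}{x_r-x_j}.
\]
As before, split off the $j=r$ factor of $E(I)$:
\[
E(I)=\prod_{i\in I}\frac{tx_r-x_i}{x_r-x_i}\cdot E'(I),
\]
where $E'(I)$ is the analogous product over the variable set $[1,N]\setminus\{r\}$. Combining $\prod_{i\in I}\frac{tx_r-x_i}{x_r-x_i}$ with $\prod_{j\notin I,j\neq r}\frac{tx_r-x_j}{x_r-x_j}$ gives the $I$-independent factor $\prod_{s\neq r}\frac{tx_r-x_s}{x_r-x_s}$. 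Similarly, $B_{M-1}(I)$ equals the same quantity over the reduced set $[1,N]\setminus\{r\}$, times the $I$-independent factor $\prod_{\ell<M}\frac{tz_\ell-x_r}{z_\ell-x_r}$, since $r\notin I$. The remaining sum is then the left-hand side of the lemma for $N-1$ variables and $M-1$ external variables. By induction, jointly on $(N,M)$, it equals $\prod_{\ell<M}\prod_{j\neq r}\frac{tz_\ell-x_j}{z_\ell-x_j}$. If $N-1=0$, the sum is empty; that case must be checked directly, or we take $N=1$ as a separate base case, where the identity is trivial because $I=\{1\}$ gives $1$ on the left.

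Putting the factors together, the residue of $F$ at $z=x_r$ is
\[
(t-1)x_r\prod_{s\neq r}\frac{tx_r-x_s}{x_r-x_s}\prod_{\ell<M}\prod_{j}\frac{tz_\ell-x_j}{z_\ell-x_j},
\]
which is exactly the residue of $R$ at $z=x_r$. Hence $D$ has no poles, and with the previous step $D\equiv0$.

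The main obstacle is the bookkeeping of the induction. The residue step needs the statement for $(N-1,M-1)$, while the evaluation at $z=0$ needs $(N,M-1)$. So we induct on $M$, keeping the result for all $N$ at each level. The base case $M=1$, for all $N$, is Lemma \ref{lemma: single external variable sum lemma}, with $N=1$ checked directly, and the case $N-1=0$ in the residue step must be treated separately.
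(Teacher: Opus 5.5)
\textbf{Gap: the statement fails for $M\ge N$, and your $N=1$ case is not trivial.}

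Your plan is essentially the paper's proof: induction on $M$ with Lemma~\ref{lemma: single external variable sum lemma} as base case, residues at $z_M=x_r$ reducing to the instance $(N-1,M-1)$, boundedness at infinity, and evaluation at $z_M=0$ reducing to $(N,M-1)$. The problem is the edge case you flagged and then dismissed.

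For $N=1$ the only subset is $I=\{1\}$, and $A(\{1\})=B_M(\{1\};z)=1$, so the left side is $1$. The right side is $\prod_{\ell=1}^M\frac{tz_\ell-x_1}{z_\ell-x_1}$, which is not $1$; already for $M=1$ it differs from $1$ by $\frac{(t-1)z_1}{z_1-x_1}$. For the same reason Lemma~\ref{lemma: single external variable sum lemma} holds only for $N\ge 2$, so you do not have the base case ``for all $N$''.

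The residue step at $N=2$ calls the instance $(1,M-1)$, so your induction relies on false cases. In fact the statement is false whenever $M\ge N$. Let every $z_\ell\to\infty$ and use $\sum_{|I|=k}E(I)=\binom{N}{k}_t$ together with the $t$-binomial theorem. The left side tends to $t^{MN}\bigl(1-(t^{-M};t)_N\bigr)$ and the right side to $t^{MN}$. Since $(t^{-M};t)_N=\prod_{i=0}^{N-1}(1-t^{i-M})$, the two limits agree (for generic $t$) exactly when $M\le N-1$. For example, with $N=M=2$ the limits are $t^3+t^2-t$ and $t^4$. The paper's proof has the same defect: it invokes the hypothesis ``valid for all $N$, hence for $N-1$'' without checking this.

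\textbf{What your argument does prove.} Once the range is tracked, your argument is correct for $1\le M\le N-1$.
The base case $(N,1)$ with $N\ge 2$ is Lemma~\ref{lemma: single external variable sum lemma}.
For $M\ge 2$, the residue step uses $(N-1,M-1)$ and the evaluation step uses $(N,M-1)$, and both stay within the range.
So you should state the restriction and induct on $M$ over all $N\ge M+1$, rather than trying to patch $N=1$.

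This restricted range is also all the rank recursion needs. In the proof of Lemma~\ref{lem: rank recursion maximal case rank a>1}, the factors attached to $k\in K\cap[N-a,N]$ combine into $\prod_{\beta\in S}\frac{tx_k-x_\beta}{x_k-x_\beta}$, which is independent of $I$ and matches the same factor on the left. What remains is the identity with $|S|=a+1-m$ internal variables and the $a-m$ external variables indexed by $K\setminus[N-a,N]$. That is exactly the case $M=N-1$.
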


\begin{proof}
We prove the identity by induction on $M\ge 1$, for all $N\ge 1$ simultaneously.

\paragraph{Base case $M=1$.}
When $M=1$, the statement becomes
\[
\sum_{\emptyset\neq I\subseteq[1,N]} (-1)^{|I|-1}t^{\binom{|I|}{2}}
\left(\prod_{i\in I}\prod_{j\in [1,N]\setminus I}\frac{tx_j-x_i}{x_j-x_i}\right)
\left(\prod_{j\in [1,N]\setminus I}\frac{tz_1-x_j}{z_1-x_j}\right)
=
\prod_{j=1}^N\frac{tz_1-x_j}{z_1-x_j},
\]
which is exactly Lemma \ref{lemma: single external variable sum lemma} with $z=z_1$. Assume the equality holds for $M-1\ge 1$ and for all $N\ge 1$. Fix $N\ge 1$ and prove it for $M$. Define
\[
L_M(z_M)\;:=\;\sum_{\emptyset\neq I\subseteq[1,N]} A(I)B_M(I;z_1,\dots,z_M),
\qquad
R_M(z_M)\;:=\;\prod_{\ell=1}^M\ \prod_{j=1}^N\frac{tz_\ell-x_j}{z_\ell-x_j}.
\]
viewed as rational functions in the single variable $z_M$ over the coefficient field $\displaystyle \mathbb{\C}(t,x_1,\dots,x_N,z_1,\dots,z_{M-1})$. For each fixed $j$, the factor $\displaystyle \frac{t z_M-x_j}{z_M-x_j}$ has a simple pole at $z_M=x_j$, and no other finite poles. Hence both $L_M(z_M)$ and $R_M(z_M)$ can only have (at worst) simple poles at $z_M=x_1,\dots,x_N$.

Set the difference
\[
D_M(z_M)\;:=\;L_M(z_M)-R_M(z_M)
\]
Once again, we force $D_M$ to be a polynomial by showing $D_M$ has no poles at $z_M=x_1,\dots,x_N$ and is bounded as $z_M\to\infty$. Fix $r\in[1,N]$. We compute $\displaystyle \operatorname*{Res}_{z_M=x_r}L_M(z_M)$ and $\displaystyle \operatorname*{Res}_{z_M=x_r}R_M(z_M)$. Write $R_M(z_M)$ as
\[
R_M(z_M)
=
\left(\prod_{\ell=1}^{M-1}\ \prod_{j=1}^N\frac{tz_\ell-x_j}{z_\ell-x_j}\right)
\cdot
\left(\prod_{j=1}^N\frac{tz_M-x_j}{z_M-x_j}\right).
\]
Only the $j=r$ factor in the second product contributes a pole at $z_M=x_r$. We get:
\begin{align}
\operatorname*{Res}_{z_M=x_r}R_M(z_M)
&=(t-1)x_r
\left(\prod_{\ell=1}^{M-1}\ \prod_{j=1}^N\frac{tz_\ell-x_j}{z_\ell-x_j}\right)
\left(\prod_{\substack{j=1\\ j\neq r}}^N\frac{tx_r-x_j}{x_r-x_j}\right).
\label{eq:ResR}
\end{align}
Similarly, for a fixed subset $I\subseteq[1,N]$, the factor depending on $z_M$ inside $B_M(I;z_1,\dots,z_M)$ is $\displaystyle \prod_{j\in [1,N]\setminus I}\frac{tz_M-x_j}{z_M-x_j}$, which has a pole at $z_M=x_r$ if and only if $r\in [1,N]\setminus I$, i.e. $r\notin I$. Hence
\[
\operatorname*{Res}_{z_M=x_r}L_M(z_M)
=
\sum_{\substack{\emptyset\neq I\subseteq[1,N]\\ r\notin I}}
A(I)\left(\prod_{\ell=1}^{M-1}\ \prod_{j\in [1,N]\setminus I}\frac{tz_\ell-x_j}{z_\ell-x_j}\right)
\operatorname*{Res}_{z_M=x_r}\left(\prod_{j\in [1,N]\setminus I}\frac{tz_M-x_j}{z_M-x_j}\right).
\]
For such $I$, write $[1,N]\setminus I=J'\cup\{r\}$ where $J':=[1,N]\setminus(I\cup\{r\}) \subseteq [1,N]\setminus\{r\}$.
Then
\[
\prod_{j\in [1,N]\setminus I}\frac{tz_M-x_j}{z_M-x_j}
=
\left(\frac{tz_M-x_r}{z_M-x_r}\right)\cdot
\left(\prod_{j\in J'}\frac{tz_M-x_j}{z_M-x_j}\right),
\]
so
\begin{align}
\operatorname*{Res}_{z_M=x_r}\left(\prod_{j\in [1,N]\setminus I}\frac{tz_M-x_j}{z_M-x_j}\right)
&=
(t-1)x_r\cdot
\prod_{j\in J'}\frac{tx_r-x_j}{x_r-x_j}.
\label{eq:ResB}
\end{align}
Also, for $\ell\le M-1$ we can factor
\[
\prod_{j\in [1,N]\setminus I}\frac{tz_\ell-x_j}{z_\ell-x_j}
=
\left(\frac{tz_\ell-x_r}{z_\ell-x_r}\right)\cdot
\prod_{j\in J'}\frac{tz_\ell-x_j}{z_\ell-x_j}.
\]
Hence
\begin{align}
\prod_{\ell=1}^{M-1}\ \prod_{j\in [1,N]\setminus I}\frac{tz_\ell-x_j}{z_\ell-x_j}
&=
\left(\prod_{\ell=1}^{M-1}\frac{tz_\ell-x_r}{z_\ell-x_r}\right)
\cdot
\left(\prod_{\ell=1}^{M-1}\ \prod_{j\in J'}\frac{tz_\ell-x_j}{z_\ell-x_j}\right).
\label{eq:Bm_factor}
\end{align}

Now expand $A(I)$. Since $r\notin I$, in $\displaystyle E(I)=\prod_{i\in I}\prod_{j\in [1,N]\setminus I}\frac{tx_j-x_i}{x_j-x_i}$ we may split the $j$-product over $[1,N]\setminus I=J'\cup\{r\}$:
\begin{align}
E(I)
&=\left(\prod_{i\in I}\frac{tx_r-x_i}{x_r-x_i}\right)\cdot
\left(\prod_{i\in I}\ \prod_{j\in J'}\frac{tx_j-x_i}{x_j-x_i}\right).
\label{eq:E_split}
\end{align}
Multiply \eqref{eq:ResB} with the factor $\displaystyle \prod_{i\in I}\frac{t x_r-x_i}{x_r-x_i}$ coming from \eqref{eq:E_split}. Noting that $I\sqcup J'=[1,N]\setminus\{r\}$, we obtain:
\begin{align}
\left(\prod_{i\in I}\frac{tx_r-x_i}{x_r-x_i}\right)\cdot
\left(\prod_{j\in J'}\frac{tx_r-x_j}{x_r-x_j}\right)
&=\prod_{\substack{p=1\\ p\neq r}}^N\frac{tx_r-x_p}{x_r-x_p},
\label{eq:key_constant}
\end{align}
which is independent of $I$. Combining \eqref{eq:ResB}, \eqref{eq:Bm_factor}, \eqref{eq:E_split}, and \eqref{eq:key_constant}, we get
\begin{align*}
\operatorname*{Res}_{z_M=x_r}L_M(z_M)
&=(t-1)x_r
\left(\prod_{\substack{p=1\\ p\neq r}}^N\frac{tx_r-x_p}{x_r-x_p}\right)
\left(\prod_{\ell=1}^{M-1}\frac{tz_\ell-x_r}{z_\ell-x_r}\right)
\\
&\quad\times
\sum_{\substack{\emptyset\neq I\subseteq[1,N]\\ r\notin I}}
(-1)^{|I|-1}t^{\binom{|I|}{2}}
\left(\prod_{i\in I}\ \prod_{j\in J'}\frac{tx_j-x_i}{x_j-x_i}\right)
\left(\prod_{\ell=1}^{M-1}\ \prod_{j\in J'}\frac{tz_\ell-x_j}{z_\ell-x_j}\right).
\end{align*}
Now reindex subsets $I\subseteq[1,N]\setminus\{r\}$ (still nonempty) and interpret $J'=([1,N]\setminus\{r\})\setminus I$. The inner sum is exactly the  LHS for the $(N-1)$-tuple $(x_p)_{p\neq r}$ and $(z_1,\dots,z_{M-1})$. By the induction hypothesis (valid for all $N$, hence for $N-1$) it equals
\[
\prod_{\ell=1}^{M-1}\ \prod_{\substack{j=1\\ j\neq r}}^N\frac{tz_\ell-x_j}{z_\ell-x_j}.
\]
Therefore
\begin{align}
\operatorname*{Res}_{z_M=x_r}L_M(z_M)
&=(t-1)x_r
\left(\prod_{\substack{p=1\\ p\neq r}}^n\frac{tx_r-x_p}{x_r-x_p}\right)
\left(\prod_{\ell=1}^{M-1}\frac{tz_\ell-x_r}{z_\ell-x_r}\right)
\left(\prod_{\ell=1}^{M-1}\ \prod_{\substack{j=1\\ j\neq r}}^N\frac{tz_\ell-x_j}{z_\ell-x_j}\right)
\nonumber\\
&=(t-1)x_r
\left(\prod_{\substack{p=1\\ p\neq r}}^n\frac{tx_r-x_p}{x_r-x_p}\right)
\left(\prod_{\ell=1}^{M-1}\ \prod_{j=1}^N\frac{tz_\ell-x_j}{z_\ell-x_j}\right),
\label{eq:ResL}
\end{align}
which matches \eqref{eq:ResR}. Thus for every $r$, $L_M$ and $R_M$ have the same residue at $z_M=x_r$, hence $D_M$ has no pole at $z_M=x_r$ and is a polynomial in $z_M$.

Note that 
\[
\frac{tz_M-x_j}{z_M-x_j}\to t
\quad\text{as }z_M\to\infty,
\]
so $R_M(z_M)$ has a finite limit as $z_M\to\infty$. Similarly, for each fixed $I$, the product $\displaystyle \prod_{j\in [1,N]\setminus I}\frac{t z_M-x_j}{z_M-x_j}$ tends to $t^{|[1,N]\setminus I|}$, so each summand in $L_M(z_M)$ has a finite limit and thus $L_M(z_M)$  has a finite limit. Therefore $D_M(z_M)$ is a polynomial bounded at infinity and so must be constant in $z_M$.

Since,
\[
B_M(I;z_1,\dots,z_{M-1},0)=B_{M-1}(I;z_1,\dots,z_{M-1}).
\]
Hence
\[
L_M(0)=\sum_{\emptyset\neq I\subseteq[1,N]}A(I)B_{M-1}(I;z_1,\dots,z_{M-1}),
\qquad
R_M(0)=\prod_{\ell=1}^{M-1}\ \prod_{j=1}^N\frac{tz_\ell-y_j}{z_\ell-y_j}.
\]
By the induction hypothesis for $M-1$, $L_M(0)=R_M(0)$, so $D_M(0)=0$. Since $D_M$ is constant in $z_M$, it follows that $D_M(z_M)\equiv 0$, i.e. $L_M(z_M)=R_M(z_M)$. This completes the induction and proves the identity for all $M\ge 1$.
\end{proof}

\begin{lemma}
    \label{lemma:appendix lemma 5}
    Let $S$ and $K$ be finite disjoint index sets with
\[
|S|=n\ge 1,\qquad |K|=m\ge 0.
\]
Let $\{x_r\}_{r\in S\cup K}$ be variables such that $x_u\neq x_v$ whenever $u\neq v$ and $u,v\in S\cup K$. For each subset $\emptyset \neq I\subseteq S$, define
\[
E_S(I)\;:=\;\prod_{i\in I}\ \prod_{\alpha\in S\setminus I}\frac{t x_\alpha-x_i}{x_\alpha-x_i}
\quad(\text{with }E_S(\emptyset)=1),\quad
B_{S,K}(I)\;:=\;\prod_{k\in K}\ \prod_{\alpha\in S\setminus I}\frac{t x_k-x_\alpha}{x_k-x_\alpha}
\quad(\text{with }B_{S,K}(S)=1).
\]
Then the following identity holds:
\begin{equation}
\label{eq:proper_subset_completion_multi_external}
\left(\sum_{\substack{\emptyset\neq I\subsetneq S}}
(-1)^{|I|-1}t^{\binom{|I|}{2}}
E_S(I)B_{S,K}(I)\right)
\;+\;
(-1)^{n-1}t^{\binom{n}{2}}
\;=\;
\prod_{k\in K}\ \prod_{\beta\in S}\frac{t x_k-x_\beta}{x_k-x_\beta}.
\end{equation}

\end{lemma}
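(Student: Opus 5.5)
The plan is to reduce the identity \eqref{eq:proper_subset_completion_multi_external} directly to Lemma~\ref{lemma: appendix Lemma A.4}, with $S$ playing the role of $[1,N]$ and $\{x_k\}_{k\in K}$ playing the role of the external variables $z_1,\dots,z_M$.

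The key observation is that the lone term $(-1)^{n-1}t^{\binom{n}{2}}$ is exactly the summand for $I=S$. Indeed, $E_S(S)=1$ because the product over $\alpha\in S\setminus S=\emptyset$ is empty, and $B_{S,K}(S)=1$ by convention, which agrees with the empty product. Hence the left-hand side of \eqref{eq:proper_subset_completion_multi_external} equals
\[
\sum_{\emptyset\neq I\subseteq S}(-1)^{|I|-1}t^{\binom{|I|}{2}}E_S(I)\,B_{S,K}(I).
\]

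\textbf{Case $m=0$.} Then $B_{S,K}(I)=1$ for every $I$ and the right-hand side is the empty product $1$. The claim is then exactly \eqref{eq: simple sum over all nonempty subsets [1,N]} of Lemma~\ref{lem: simple sum over all subsets [1,N]}, after relabeling $S\cong[1,n]$.

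\textbf{Case $m\ge1$.} Enumerate $K=\{k_1,\dots,k_m\}$ and set $z_\ell:=x_{k_\ell}$. Relabel $S\cong[1,N]$ with $N=n$. Then $E_S(I)=E(I)$ and $B_{S,K}(I)=B_M(I;z_1,\dots,z_M)$ with $M=m$, in the notation of Lemma~\ref{lemma: appendix Lemma A.4}. The orientation of the factors should be checked: the lemma uses $\frac{tx_j-x_i}{x_j-x_i}$ with $i\in I$ and $j\notin I$, and $\frac{tz_\ell-x_j}{z_\ell-x_j}$ with $j\notin I$. These match $E_S$ (with $\alpha$ as $j$) and $B_{S,K}$ (with $k$ as $\ell$). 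The distinctness hypotheses also hold: the $x_i$ are pairwise distinct, the $z_\ell$ are pairwise distinct, and $z_\ell\neq x_j$, since $S$ and $K$ are disjoint and all $x_r$ are distinct. Lemma~\ref{lemma: appendix Lemma A.4} then gives exactly $\prod_{k\in K}\prod_{\beta\in S}\frac{tx_k-x_\beta}{x_k-x_\beta}$.

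There is no real obstacle here. The only points needing care are the absorption of the isolated $I=S$ term into the sum, the empty-product conventions, and the separate treatment of $m=0$, which Lemma~\ref{lemma: appendix Lemma A.4} does not cover because it assumes $M\ge1$.
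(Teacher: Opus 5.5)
Your reduction is exactly the paper's proof. You absorb the $I=S$ term via $E_S(S)=B_{S,K}(S)=1$, relabel $S\cong[1,n]$ and $\{x_k\}_{k\in K}$ as $z_1,\dots,z_m$, and invoke Lemma~\ref{lemma: appendix Lemma A.4}. Your separate treatment of $m=0$ via Lemma~\ref{lem: simple sum over all subsets [1,N]} even covers a case the paper skips.

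The step you call unobstructed is where the argument breaks. Lemma~\ref{lemma: appendix Lemma A.4} is false for $N\le M$, and so the statement you are proving is false whenever $1\le n\le m$.
\begin{itemize}
\item Take $n=1$, $S=\{\beta\}$. The sum over $\emptyset\neq I\subsetneq S$ is empty, so the left side is $(-1)^0t^0=1$. The right side is $\prod_{k\in K}\frac{tx_k-x_\beta}{x_k-x_\beta}$, which is not $1$ for $m\ge1$, $t\neq1$.
\item It fails at $n=m=2$ too. With $t=3$, $x_S=(1,2)$, $x_K=(3,4)$, the left side is $175-\frac{44}{3}-3=\frac{472}{3}$. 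The right side is $4\cdot7\cdot\frac{11}{3}\cdot5=\frac{1540}{3}$.
\end{itemize}

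You checked the hypothesis $M\ge1$ of Lemma~\ref{lemma: appendix Lemma A.4}, but not whether its proof actually covers your $N$. Its base case is Lemma~\ref{lemma: single external variable sum lemma}, which needs $N\ge2$; at $N=1$ it would assert $1=\frac{tz-x_1}{z-x_1}$. Its residue step calls the induction hypothesis at $(N-1,M-1)$, so the induction only closes on $N\ge M+1$.

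Conceptually, put $y_\beta=x_\beta^{-1}$. The Hall--Littlewood Cauchy identity together with the $q=0$ Macdonald eigenvalues gives that left side minus right side equals $-\sum_{\ell(\lambda)=n}b_\lambda(t)P_\lambda(y_S;t)P_\lambda(x_K;t)$. Its lowest-order term in $y$ is $-(1-t)(1-t^2)\cdots(1-t^n)\,e_n(y_S)\,e_n(x_K)$, which survives (for generic $t$) exactly when $m\ge n$.

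So the lemma needs the extra hypothesis $n>m$, and under it your argument goes through verbatim. Lemma~\ref{lemma: appendix Lemma A.4} restricted to $N>M$ has a valid inductive proof: the calls to $(N-1,M-1)$ and $(N,M-1)$ stay in that range, with bases Lemma~\ref{lem: simple sum over all subsets [1,N]} and Lemma~\ref{lemma: single external variable sum lemma} for $N\ge2$.

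The restricted version is also all that Lemma~\ref{lem: rank recursion maximal case rank a>1} really needs, provided you first split off $T=K\cap[N-a,N]$. For $k\in T$, two factors combine into the $I$-independent product $\prod_{\beta\in S}\frac{tx_k-x_\beta}{x_k-x_\beta}$: the factor $\prod_{i\in I}\frac{tx_k-x_i}{x_k-x_i}$ from the recursion coefficient, and $\prod_{\ell\in S\setminus I}\frac{tx_k-x_\ell}{x_k-x_\ell}$ from $[\Gamma_K]\mathcal M^a_{N-|I|}$. This leaves the external set $K\setminus T$, with $|K\setminus T|=|S|-1$, inside the valid range. Applying the lemma to $(S,K)$ itself, as the paper does, lands in the false range whenever $T\neq\emptyset$.
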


\begin{proof}

Fix any ordering of the sets:
\[
S=\{\beta_1,\dots,\beta_n\}\quad(\text{all distinct}),\qquad
K=\{k_1,\dots,k_m\}.
\]
Define
\[
y_j:=x_{\beta_j}\ \ (1\le j\le n),\qquad z_\ell:=x_{k_\ell}\ \ (1\le \ell\le m).
\]
For any subset $J\subseteq[1,n]$, write $J^c=[1,n]\setminus J$ and define 
\[
E(J):=\prod_{i\in J}\ \prod_{j\in [1,n]\setminus J}\frac{ty_j-y_i}{y_j-y_i},\quad 
A(J):=(-1)^{|J|-1}t^{\binom{|J|}{2}}E(J)\quad(J\neq\emptyset),
\]
\[
B_m(J;z_1,\dots,z_m)\;:=\;\prod_{\ell=1}^m\ \prod_{j\in J^c}\frac{tz_\ell-y_j}{z_\ell-y_j}.
\]
By Lemma \ref{lemma: appendix Lemma A.4}, we have
\begin{equation}
\label{eq:lemma4_applied}
\sum_{\emptyset\neq J\subseteq[1,n]} A(J)B_m(J;z_1,\dots,z_m)
=
\prod_{\ell=1}^m\ \prod_{j=1}^n \frac{t z_\ell-y_j}{z_\ell-y_j}.
\end{equation}

Given any $J\subseteq[1,n]$, define the corresponding subset of $S$ by
\[
I(J)\;:=\;\{\beta_j:\ j\in J\}\subseteq S, \quad
S\setminus I(J)=\{\beta_j:\ j\in [1,n]\setminus J\}.
\]

Compute:
\begin{align*}
E(J)&=\prod_{i\in J}\ \prod_{j\in [1,n]\setminus J}\frac{ty_j-y_i}{y_j-y_i}=\prod_{i\in J}\ \prod_{j\in [1,n]\setminus J}\frac{tx_{\beta_j}-x_{\beta_i}}{x_{\beta_j}-x_{\beta_i}}=\prod_{\beta_i\in I(J)}\ \prod_{\beta_j\in S\setminus I(J)}\frac{tx_{\beta_j}-x_{\beta_i}}{x_{\beta_j}-x_{\beta_i}}=E_S(I(J)).
\end{align*}

\begin{align*}
B_m(J;z_1,\dots,z_m)=\prod_{\ell=1}^m\ \prod_{j\in J^c}\frac{tz_\ell-y_j}{z_\ell-y_j}=\prod_{\ell=1}^m\ \prod_{j\in J^c}\frac{tx_{k_\ell}-x_{\beta_j}}{x_{k_\ell}-x_{\beta_j}}=\prod_{k\in K}\ \prod_{\beta\in S\setminus I(J)}\frac{tx_k-x_\beta}{x_k-x_\beta}=B_{S,K}(I(J)).
\end{align*}
Then, \eqref{eq:lemma4_applied} becomes
\begin{equation}
\label{eq:sum_all_nonempty_I}
\sum_{\emptyset\neq I\subseteq S}
(-1)^{|I|-1}t^{\binom{|I|}{2}}E_S(I)B_{S,K}(I)
\;=\;
\prod_{k\in K}\ \prod_{\beta\in S}\frac{t x_k-x_\beta}{x_k-x_\beta}.
\end{equation}

On the left-hand side of \eqref{eq:sum_all_nonempty_I}, separate the contribution of $I=S$:
\[
\sum_{\emptyset\neq I\subseteq S}(\cdots)
=
\left(\sum_{\substack{\emptyset\neq I\subsetneq S}}(\cdots)\right)
+
\left[(-1)^{|S|-1}t^{\binom{|S|}{2}}E_S(S)B_{S,K}(S)\right].
\]
Note that both products defining $E_S(S)$ and $B_{S,K}(S)$ are empty products, hence $E_S(S)=1 \ \textrm{and}\ B_{S,K}(S)=1$. Also $|S|=n$, so the summand corresponding to $I=S$ equals $(-1)^{n-1}t^{\binom{n}{2}}$ and we have \eqref{eq:sum_all_nonempty_I} is equivalent to
\[
\left(\sum_{\substack{\emptyset\neq I\subsetneq S}}
(-1)^{|I|-1}t^{\binom{|I|}{2}}E_S(I)B_{S,K}(I)\right)
\;+\;
(-1)^{n-1}t^{\binom{n}{2}}
\;=\;
\prod_{k\in K}\ \prod_{\beta\in S}\frac{t x_k-x_\beta}{x_k-x_\beta},
\]
\end{proof}

\section{\texorpdfstring{Determinant interpretation of the $q$-shift coefficients}{Determinant interpretation of the q-shift coefficients}}
\label{appendix2}
Let $m \ge 1$ and let $x_1,\ldots,x_m$ be pairwise distinct variables. Let $t$ be a parameter with $t \neq 1$. Define

$$f_{ij} = \frac{t x_j - x_i}{x_j - x_i} \qquad (i \neq j).$$

Define the $m \times m$ matrix $N$ by

$$
N_{ii} = \prod_{k \neq i} f_{ik}, \qquad
N_{ij} = (f_{ij}-1)\prod_{k \notin \{i,j\}} f_{ik} \quad (i \neq j).
$$

For any subset $J \subseteq \{1,\ldots,m\}$, let $N_J$ denote the principal submatrix of $N$ with rows and columns indexed by $J$.

\begin{thm}
\label{thm: determinant identity}
For any nonempty subset $J \subseteq \{1,\ldots,m\}$, we have the following identity:
$$\det(N_J) = t^{\binom{|J|}{2}} \prod_{i \in J}\,\prod_{j \notin J} f_{ij}.$$
In particular, $\displaystyle \det(N) = t^{\binom{m}{2}}$, and $\displaystyle \det(N_\varnothing) = 1$ by the usual convention.
\end{thm}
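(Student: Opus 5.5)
The plan is to reduce to the case $J=\{1,\dots,m\}$ by a row factorization, and then to recognize the remaining matrix as a Cauchy matrix.

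\emph{Reduction.} For $i\in J$, every entry of row $i$ of $N_J$ contains the common factor $\prod_{k\notin J} f_{ik}$. Indeed, for $i\neq j$ in $J$ the index set $\{k\notin\{i,j\}\}$ contains all of $[1,m]\setminus J$, and the same holds for $\{k\neq i\}$. Pulling this factor out of each row gives
\[
\det(N_J)=\Bigl(\prod_{i\in J}\prod_{k\notin J} f_{ik}\Bigr)\det(N'),
\]
where $N'$ is the matrix $N$ built from the variables $\{x_i\}_{i\in J}$ alone. It therefore suffices to prove $\det(N)=t^{\binom{m}{2}}$ for the full matrix with arbitrary $m$. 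The case $J=\varnothing$ is the usual convention.

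\emph{Full case.} Set $A_i=\prod_{k\neq i} f_{ik}$. Then $N_{ii}=A_i$ and $N_{ij}=A_i\,(f_{ij}-1)/f_{ij}$ for $i\neq j$. Factoring $A_i$ out of row $i$ gives $\det N=\prod_i A_i\cdot\det M$, where $M_{ii}=1$ and $M_{ij}=1-1/f_{ij}$. A short computation gives
\[
M_{ij}=\frac{(t-1)x_j}{t x_j-x_i}\qquad (i\neq j).
\]
The same formula evaluated at $i=j$ returns $1$, so $M$ is uniformly of this form. Hence
\[
\det M=(t-1)^m\Bigl(\prod_j x_j\Bigr)\det\Bigl(\frac{1}{t x_j-x_i}\Bigr)_{i,j}.
\]
Apply the Cauchy determinant formula with $a_j=t x_j$ and $b_i=x_i$:
\[
\det\Bigl(\frac{1}{a_j-b_i}\Bigr)=\frac{\prod_{i<j}(a_j-a_i)(b_i-b_j)}{\prod_{i,j}(a_j-b_i)}.
\]
Its numerator equals $t^{\binom m2}\prod_{i<j}\bigl(-(x_i-x_j)^2\bigr)$. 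In its denominator, the diagonal factors $i=j$ contribute $\prod_j (t-1)x_j$, which cancels the prefactor $(t-1)^m\prod_j x_j$. What remains is
\[
\det M=t^{\binom m2}(-1)^{\binom m2}\,\frac{\prod_{i<j}(x_i-x_j)^2}{\prod_{i\neq j}(t x_j-x_i)}.
\]

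\emph{Conclusion.} We have $\prod_i A_i=\prod_{i\neq k}(t x_k-x_i)\big/\prod_{i\neq k}(x_k-x_i)$, and $\prod_{i\neq k}(x_k-x_i)=(-1)^{\binom m2}\prod_{i<j}(x_i-x_j)^2$. Multiplying $\prod_i A_i$ by $\det M$, the products $\prod_{i\neq j}(t x_j-x_i)$ cancel, the squared Vandermonde factors cancel, and the two signs $(-1)^{\binom m2}$ cancel. This leaves $\det N=t^{\binom m2}$, and combined with the reduction it proves the theorem.

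The main point of care is the sign bookkeeping in the Cauchy formula and the observation that the diagonal of $M$ fits the off-diagonal formula. That observation is what makes the Cauchy structure apply to the whole matrix.
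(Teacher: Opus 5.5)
Your proof is correct and takes essentially the paper's route: factor $\prod_{k\neq i} f_{ik}$ out of each row to get the matrix with entries $(t-1)x_j/(t x_j-x_i)$ (the diagonal included), evaluate it by the Cauchy determinant, and cancel against the row factors. The only difference is order: you first remove the factors $\prod_{k\notin J} f_{ik}$ to reduce to the full case, while the paper works with $N_J=D_J M_J$ directly and splits $\prod_{i\in J} d_i$ afterwards, in its auxiliary lemma.
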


\begin{proof}
Let $D = \operatorname{diag}(d_1,\ldots,d_m)$ with $d_i = \prod_{k \neq i} f_{ik}$.  
Let $M$ be the $m \times m$ matrix
$$M_{ij} = \frac{(t-1)x_j}{t x_j - x_i} \qquad (i,j = 1,\ldots,m).$$
Then, via direct entry-wise computation, we have $N = D M$.  Write $M = (t-1)\,A\,X$ where
$$A_{ij} = \frac{1}{t x_j - x_i}, \qquad X = \operatorname{diag}(x_1,\ldots,x_m).$$
For a principal submatrix indexed by $J \subseteq \{1,\ldots,m\}$ with $|J| = r$, let the elements of $J$ be $j_1 < j_2 < \cdots < j_r$. Then
$$M_J = (t-1)^r \, A_J \, X_J,$$
with $\displaystyle (A_J)_{ab} = \frac{1}{t x_{j_b} - x_{j_a}}$ and $X_J = \operatorname{diag}(x_{j_1},\ldots,x_{j_r})$.

Observe $A_J = -B$ where $\displaystyle B_{ab} = \frac{1}{x_{j_a} - t x_{j_b}}$. Such a matrix $B$ is called the Cauchy matrix, see for example \cite{Schechter1959,cauchy1841exercices2} with determinant given by
$$\det(B) = \frac{\prod_{1 \le a < b \le r} (x_{j_a} - x_{j_b})(t x_{j_b} - t x_{j_a})}
{\prod_{a=1}^r \prod_{b=1}^r (x_{j_a} - t x_{j_b})}= t^{\binom{r}{2}} (-1)^{\binom{r}{2}}
\frac{\prod_{a<b} (x_{j_a} - x_{j_b})^2}
{\prod_{a,b} (x_{j_a} - t x_{j_b})}.$$
Then, since  $\det(A_J) = (-1)^r \det(B)$, we have: 
$$\det(A_J) = (-1)^r \det(B)
= t^{\binom{r}{2}} (-1)^{r + \binom{r}{2}}
\frac{\prod_{a<b} (x_{j_a} - x_{j_b})^2}
{\prod_{a,b} (x_{j_a} - t x_{j_b})}.$$
and thus
\begin{equation}
    \label{eq: MJ determinant}
    \det(M_J) = (t-1)^r \det(A_J) \det(X_J)
= (t-1)^r \, t^{\binom{r}{2}} (-1)^{r + \binom{r}{2}}
\frac{\prod_{a<b} (x_{j_a} - x_{j_b})^2}
{\prod_{a,b} (x_{j_a} - t x_{j_b})}
\;\prod_{b=1}^r x_{j_b}.
\end{equation}
Since $N_J = D_J M_J$,
$$\det(N_J) = \Bigl(\prod_{i \in J} d_i\Bigr) \det(M_J).$$

Now, $$\prod_{i \in J} d_i = \prod_{i \in J} \prod_{k \neq i} f_{ik}
= \Bigl(\prod_{\substack{i,k \in J \\ i \neq k}} f_{ik}\Bigr)
\Bigl(\prod_{i \in J} \prod_{k \notin J} f_{ik}\Bigr).$$
Thus
$$\det(N_J) = \Bigl(\prod_{\substack{i,k \in J \\ i \neq k}} f_{ik}\Bigr)
\Bigl(\prod_{i \in J} \prod_{k \notin J} f_{ik}\Bigr)
\det(M_J).$$
Applying Lemma \ref{lemma:submatrix determinant as t power } below, we have:
$$\det(N_J) = t^{\binom{r}{2}} \prod_{i \in J} \prod_{k \notin J} f_{ik},$$
as desired.

\end{proof}

\begin{lemma}
    \label{lemma:submatrix determinant as t power }
    $$\Bigl(\prod_{\substack{i,k \in J \\ i \neq k}} f_{ik}\Bigr) \det(M_J) = t^{\binom{r}{2}}.$$
\end{lemma}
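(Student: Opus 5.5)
The plan is to substitute the explicit value \eqref{eq: MJ determinant} of $\det(M_J)$ and show that the $x$-dependent pieces cancel exactly against the product of the $f_{ik}$ with $i\neq k$ in $J$. Write $J=\{j_1<\dots<j_r\}$ and abbreviate $y_a:=x_{j_a}$.

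First, group the product over ordered pairs into unordered pairs $a<b$:
\[
\prod_{\substack{i,k\in J\\ i\neq k}} f_{ik}=\prod_{a<b} f_{j_aj_b}f_{j_bj_a}
=\prod_{a<b}\frac{(ty_b-y_a)(ty_a-y_b)}{(y_b-y_a)(y_a-y_b)}
=\prod_{a<b}\frac{(ty_b-y_a)(ty_a-y_b)}{-(y_a-y_b)^2}.
\]
The factor $(-1)^{\binom r2}$ produced here will be tracked separately.

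Next, treat the denominator $\prod_{a,b}(y_a-ty_b)$ of \eqref{eq: MJ determinant}. Split it into the diagonal part and the off-diagonal part:
\[
\prod_{a}(y_a-ty_a)=(1-t)^r\prod_a y_a,
\qquad
\prod_{a\neq b}(y_a-ty_b)=\prod_{a<b}(y_a-ty_b)(y_b-ty_a).
\]
Each off-diagonal pair equals $(ty_b-y_a)(ty_a-y_b)$, since $(y_a-ty_b)(y_b-ty_a)=(-1)^2(ty_b-y_a)(ty_a-y_b)$.

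With these two expressions in hand, the cancellations are as follows.
\begin{itemize}
\item The squares $(y_a-y_b)^2$ in the numerator of \eqref{eq: MJ determinant} cancel the denominators of the $f$-product.
\item The off-diagonal pairs cancel the numerators of the $f$-product.
\item The factor $\prod_a y_a$ cancels $\det X_J=\prod_b y_b$.
\item $(t-1)^r/(1-t)^r=(-1)^r$.
\end{itemize}

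What remains is a sign times $t^{\binom r2}$. The signs collected are:
\begin{itemize}
\item $(-1)^{r+\binom r2}$ from \eqref{eq: MJ determinant};
\item $(-1)^{\binom r2}$ from the $f$-product;
\item $(-1)^r$ from the diagonal.
\end{itemize}
Their product is $(-1)^{2r+2\binom r2}=1$, which gives the claim.

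The only real obstacle is sign bookkeeping. I would double-check the sign in the Cauchy determinant formula quoted before \eqref{eq: MJ determinant}. If it turns out off by $(-1)^{\binom r2}$, I would instead verify the result directly on the cases $r=1,2$ to pin down the correct normalization. For $r=1$, $M_{jj}=(t-1)x_j/(tx_j-x_j)=1$, which is consistent with the claim.
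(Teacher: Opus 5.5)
Your proposal is correct and follows the paper's route: substitute \eqref{eq: MJ determinant} and cancel factor by factor. The only difference is bookkeeping. You pair the off-diagonal factors of $\prod_{a,b}(y_a-ty_b)$ directly against the $f$-numerators. The paper instead writes $\prod_{a\neq b}(ty_b-y_a)$ as the full product divided by its diagonal, picking up $(-1)^{r^2}$. The Cauchy sign you wanted to double-check is correct as quoted, and your $r=2$ check confirms it, so no renormalization is needed.
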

\begin{proof}
 $$\prod_{\substack{i,k \in J \\ i \neq k}} f_{ik}
= \frac{\prod_{a \neq b} (t x_{j_b} - x_{j_a})}
{\prod_{a \neq b} (x_{j_b} - x_{j_a})}.$$

The numerator can be rewritten as:
$$\prod_{a \neq b} (t x_{j_b} - x_{j_a})
= \frac{\prod_{a,b} (t x_{j_b} - x_{j_a})}
{\prod_{a} (t x_{j_a} - x_{j_a})}
= \frac{\prod_{a,b} (t x_{j_b} - x_{j_a})}
{(t-1)^r \prod_{a} x_{j_a}}=\frac{(-1)^{r^2} \prod_{a,b} (x_{j_a} - t x_{j_b})}{(t-1)^r \prod_{a} x_{j_a}}.$$
On the other hand, the denominator can be rewritten as:
$$\prod_{a \neq b} (x_{j_b} - x_{j_a})
= \prod_{a<b} (x_{j_b} - x_{j_a})(x_{j_a} - x_{j_b})
= (-1)^{\binom{r}{2}} \prod_{a<b} (x_{j_a} - x_{j_b})^2.$$
Hence
$$\prod_{\substack{i,k \in J \\ i \neq k}} f_{ik}
= \frac{(-1)^{r^2} \prod_{a,b} (x_{j_a} - t x_{j_b})}
{(t-1)^r (\prod_{a} x_{j_a})\, (-1)^{\binom{r}{2}} \prod_{a<b} (x_{j_a} - x_{j_b})^2}
= (-1)^{r^2 - \binom{r}{2}}
\frac{\prod_{a,b} (x_{j_a} - t x_{j_b})}
{(t-1)^r (\prod_{a} x_{j_a}) \prod_{a<b} (x_{j_a} - x_{j_b})^2}.$$

Note $\displaystyle (-1)^{r^2 - \binom{r}{2}} = (-1)^{r + \binom{r}{2}}$. Now multiply this by $\det(M_J)$ from \eqref{eq: MJ determinant}. The factors combine as:
$$\begin{aligned}
\Bigl(\prod_{i \neq k \in J} f_{ik}\Bigr) \det(M_J) &= \left[(-1)^{r + \binom{r}{2}}
\frac{\prod_{a,b} (x_{j_a} - t x_{j_b})}
{(t-1)^r (\prod_{a} x_{j_a}) \prod_{a<b} (x_{j_a} - x_{j_b})^2}\right]\\
&\times \left[(t-1)^r \, t^{\binom{r}{2}} (-1)^{r + \binom{r}{2}}
\frac{\prod_{a<b} (x_{j_a} - x_{j_b})^2}
{\prod_{a,b} (x_{j_a} - t x_{j_b})}
\;\prod_{a} x_{j_a}\right]\\
&= t^{\binom{r}{2}}.
\end{aligned}$$
and all factors cancel because $\displaystyle (-1)^{2(r+\binom{r}{2})} = 1$.
\end{proof}
\newpage
\bibliographystyle{alpha}
\bibliography{ref.bib}
\end{document}